\documentclass[11pt]{amsart}
\usepackage{amssymb, latexsym}
\usepackage{graphicx}
\theoremstyle{plain}
\newtheorem{theorem}{Theorem}

\newtheorem {lemma}{Lemma}
\newtheorem{proposition}{Proposition}

\newtheorem*{1'}{Theorem 1-Bessel}
\newtheorem*{P2'}{Proposition 2-Bessel}
\newtheorem*{P3'}{Proposition 3-Bessel}
\newtheorem*{P4'}{Proposition 4-Bessel}
\newtheorem*{C1'}{Corollary 1-Bessel}

\newtheorem*{2'}{Theorem 2-Bessel}
\newtheorem*{3'}{Theorem 3-Bessel}

\theoremstyle{remark}

\newtheorem*{Remark 1}{Remark 1}
\newtheorem*{Remark 2}{Remark 2}
\newtheorem*{Remark 3}{Remark 3}
\newtheorem*{Remark 4}{Remark 4}

\numberwithin{equation}{section}

\begin{document}

\title [Comparison of Brownian jump and Brownian bridge resetting]
{Comparison of  Brownian jump and Brownian bridge resetting in  search for  Gaussian target on the line and in space}

\author{Ross G. Pinsky}

%\noindent  pinsky@math.technion.ac.il\ \ \ \ tel: 972-4-829-4083\ \ \  fax: 972-4-829-3388

\address{Department of Mathematics\\
Technion---Israel Institute of Technology\\
Haifa, 32000\\ Israel}
\email{ pinsky@math.technion.ac.il}

\urladdr{https://pinsky.net.technion.ac.il/}

\subjclass[2010]{60J60 (primary), 60J70 (secondary)} \keywords{diffusive search, Brownian bridge, resetting, random target, Gaussian distribution}
\date{}

\begin{abstract}

We consider a Brownian searcher with diffusion coefficient $D$ in $d$-dimensions, for $d=1,2,3$, that starts from the origin and searches for a random target with  a centered-Gaussian distribution. The searcher is also equipped with a resetting mechanism that resets the searcher back to the origin.
We consider three different resetting mechanisms. One is a Poissonian reset with rate $r$ whereby at the reset time the searcher instantaneously jumps back to the origin,  one is a periodic  reset with period $T$ whereby at the reset time  the searcher
 instantaneously jumps back to the origin, and one is a Brownian bridge reset with period $T$, whereby the Brownian motion is conditioned to return to the origin at time $T$. Unlike the first two search processes, this last one
 has continuous paths.
 For $d=1$ and $d=3$, we obtain analytic formulas for the expected time to locate the random target, and minimize them over $r$ or $T$, as the case may be. In one dimension, this expected time scales
 as $\frac{\sigma^2}D$, which is not surprising, but  in three dimensions we obtain the anomalous scaling $\frac{\sigma^3}D$. We compare the relative efficiencies of the three search processes. In two dimensions, we show that
 the expected time scales as $\frac{\sigma^2}D$ for the Poissonian reset mechanism.

\end{abstract}

\maketitle
\section{Introduction and Statement of Results}\label{intro}
The use of resetting in  search problems is
 a  common phenomenon in various contexts.  For example, in everyday life,  one might be searching for some target, such as a face in a crowd or a misplaced object. After having searched unsuccessfully for a while,
  there is   a tendency to return to the starting point  and begin the search anew.
Other contexts where search problems  frequently involve resetting include  animal foraging \cite{BC,VdRS} and internet search algorithms.

Over the past decade or so, a variety of  stochastic processes with resetting have attracted much attention. See \cite{EMS} for a rather comprehensive, recent overview.  Prominent among such processes is the  diffusive search process with instantaneous resetting, which we now describe.
Consider a   target located at  $a\in \mathbb{R}^d$, $d\ge1$,
 and consider  a search process that  sets off from the origin and performs  $d$-dimensional Brownian motion with diffusion coefficient $D>0$, which is fixed once and for all.
Throughout the paper, we  suppress the dependence on $D$ in our notation.
The search process  is also equipped with   an exponential clock with  rate $r$,  so that
if it has  failed to locate the target by the time the clock rings, then its position is instantaneously reset to the origin and it continues  its search anew independently with the same  rule.
See, for example, \cite{EM1,EM3} for details of the construction of the process.
We consider $r$ as a parameter that can be varied.
In dimension one, the target is considered ``located'' when the process
hits the point $a$. In dimensions two and higher, since the probability of a Brownian motion ever hitting a particular point is zero, one chooses an $\epsilon_0>0$ which is fixed once and for all,  and the target is considered ``located''
when the process hits the $\epsilon_0$-ball centered at $a$.

Denote the search process by $X^{(d;r)}(\cdot)$ and let $P_0^{(d;r)}$  and  $E_0^{(d;r)}$ denote probabilities and  expectations for the process starting from 0.
Let
\begin{equation}\label{stoppingtime}
\tau_a=\begin{cases}\inf\{t\ge0:X^{(1;r)}(t)=a\},\ d=1;\\ \inf\{t\ge0:|X^{(d;r)}(t)-a|\le\epsilon_0\},\ d\ge 2\end{cases}
\end{equation}
denote the time at which a target at  $a\in\mathbb{R}^d$ is located.
Throughout the paper, we suppress the dependence on $\epsilon_0$ in the notation. For the above search process, as well as for other related models,
quite a number of papers have investigated  a number of phenomena, in particular,  the expected time to reach the target, $E_0^{(d;r)}\tau_a$,  the probability of not locating the target for
large time, $P_0^{(d;r)}(\tau_a>t)$ for large $t$, and  the stationary probability measure for the process.
 See, for example, \cite{EM1, EM2,EM3, EMM, P, NG, dMMT, RR}.

%One may be interested in several statistics, the most important one probably  being  the expected time to locate the target, %$\int_{\mathbb{R}}\big(E_0^{(d;r)}\tau_a\big)\mu(da)$. %One then wishes to minimize this quantity over $r>0$.
% See, for example, \cite{EM1, EM2,EM3, EMM, P, NG, dMMT, RR} for a sampling of articles on this model and related ones.
%See \cite{Pin22} for an analysis of the large time behavior of $\int_{\mathbb{R}}P_0^{(d;r)}(\tau_a>t)\mu(da)$, the probability of not locating the target by time $t$.

The resetting in the above model is of course discontinuous---at the ring of the exponential clock, the  search process instantaneously jumps back to its initial position at the origin.
In certain applications, this may be a realistic assumption, but in many others, it is more realistic to consider a type of resetting for which the search process remains continuous.
For example, while the instantaneous jump model might be reasonable for an internet search,  a continuous type of resetting would be more realistic for animal foraging.
A number of very recent papers have addressed this issue, using  deterministic return processes, sometimes with constant velocity in the direction of the reset point, and sometimes with other regimes
\cite{BS}, \cite{MCM},\cite{PKR}, \cite{B}.
In these papers, in order to make the problem more tractable mathematically, the target is not allowed to be discovered during the return process.
These papers study, in particular, the stationary  probability distribution for these processes as well as the expected time to reach the target at $a\in\mathbb{R}^d$.

In this paper, we  introduce a continuous search process with   resetting via the Brownian bridge.
 The Brownian bridge with time interval $T$, which we describe in more detail below, is a Brownian motion conditioned to return to its starting point at time $T$.
 The search process, which we call the Brownian bridge reset search process with time interval $T$, performs one Brownian bridge with time interval $T$ after the other, until it locates the target.
 Unlike the continuous models in the papers cited in the above paragraph, this model  does not have a search regime with its own parameters, followed by a return regime with its own parameters.
Rather, there is one seamless process which searches and returns with two  parameters throughout; namely, the diffusion coefficient $D$ and the time interval $T$.
Furthermore, unlike in the above models, the  target may be located at any time.
(We note that a recent paper \cite{DMS} introduced a hybrid version of the above two search processes in the one-dimensional case. The  process in that paper is the search process
with instantaneous resetting, conditioned to return to the origin at time $T$. The authors study various properties of this process on the time interval $[0,T]$.)

Most of the papers in the literature deal with the expected hitting time of a fixed target $a$, rather than a random target, although  we note that the early paper \cite{EM2} does consider random targets.
Of course, once one has a formula for the expected hitting time of  a fixed target, the formula for a random target is obtained simply by integration. However, we are interested in optimizing the parameters to obtain the
smallest possible hitting time. In order to understand the scaling and to obtain numerically the optimal hitting time, one needs to express the  integrated expected hitting time in a reasonably nice closed form.
In this paper we will study the expected hitting time of a random target
$a\in\mathbb{R}^d$ that is distributed according to a Gaussian distribution centered at the origin, the point to which the search process is reset.
Our choice of centered Gaussian distributions has been dictated by a combination of what would be interesting and natural, and by what will lend itself to closed form formulas.
(We note that for the case of instantaneous resetting,  a recent paper \cite{Pin23} studied the behavior of the probability of not locating a random target for large time, where the target comes from a rather wide family of symmetric distributions, including Gaussian distributions.)

We wish to
 compare the  efficiency
 of  the Brownian bridge reset search process,  with its parameters $D$ and $T$,  to the instantaneous reset  search process described above, with its parameters
 $D$ and $r$, where efficiency is measured by the infimum of the expected value of the search time, the infimum being taken over $T$ and over $r$ respectively for the two processes.
This leads us naturally  to consider also  a third search process, as we now explain.

 The instantaneous reset search process and the Brownian bridge reset search process are both Markov processes, but there is one fundamental difference between them;
 namely the instantaneous process is time-homogeneous while the Brownian bridge reset search process is time-inhomogeneous, with cyclic time-inhomogeneity of period $T$.
 It seems  that there is no way around the time-inhomogeneity of the Brownian bridge reset search process. However, we can easily change the instantaneous process from
 time homogeneous to time-inhomogeneous with cyclic time-inhomogeneity of period $T$. Indeed, instead of using an exponential clock with rate $r$ to determine when the process jumps back to its starting point, we simple let it jump back to its starting point every $T$ units of time.

From now on, in order to fall in line with rather prevalent terminology in the literature,  we will refer to the original instantaneous reset search process, with parameters $D$ and $r$, as the Poissonian instantaneous reset search process, and we will refer to the instantaneous reset search process described in the previous paragraph, with parameters $D$ and $T$, as the periodic instantaneous reset search process.

Denote the Brownian bridge reset search process by $X^{\text{bb},d;T}(\cdot)$ and let
 $P_0^{\text{bb},d;T}$ and $E_0^{\text{bb},d;T}$ denote probabilities and expectations for  process  starting from 0.
Denote the periodic  instantaneous reset search process by   $X^{d;T}(\cdot)$ and let
$P_0^{d;T}$ and $E_0^{d;T}$ denote probabilities and expectations for the process starting from 0.
We have already established notation for the Poissonian instantaneous reset search process.
Note that the only difference in notation between the Poissonian and the periodic  instantaneous reset search processes is the use of $T$ versus $r$.
For the
Brownian bridge reset search process and the periodic  instantaneous reset search process, we
 use the same notation, $\tau_a$, as was used for the Poissonian reset search process for the time to locate the target.
 Thus, for these two processes, $\tau_a$ is defined as in \eqref{stoppingtime}, but with
 $X^{d;r}$ replaced by $X^{\text{bb},d;T}$ or by $X^{d;T}$.
Before we state our results, we define rigorously the two search processes $X^{\text{bb},d;T}(\cdot)$ and  $X^{d;T}(\cdot)$.

\begin{figure}\label{fig1}
\includegraphics[scale=.2]{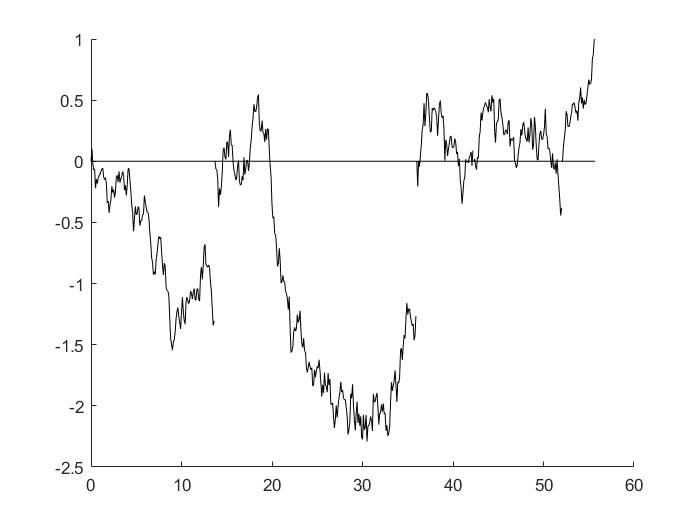}
\caption{Poissonian instantaneous reset; $r=\frac1{10},\ a=1$ }
\end{figure}

\begin{figure}\label{fig2}
\includegraphics[scale=.2]{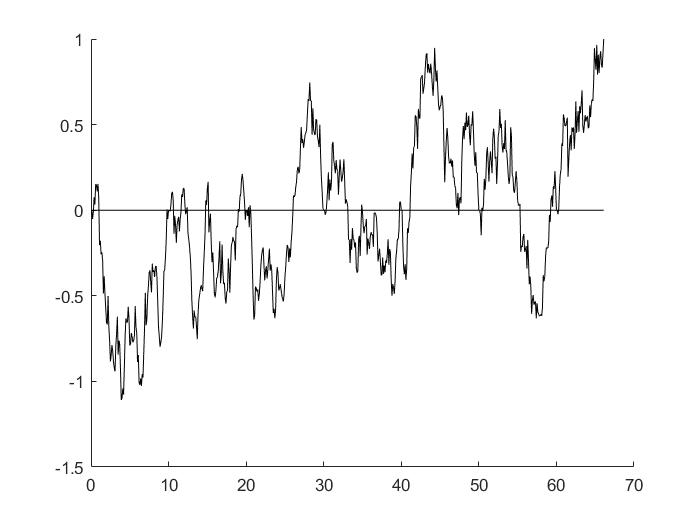}
\caption{Brownian bridge reset; $T=10, \ a=1$ }
\end{figure}

\begin{figure}\label{fig3}
\includegraphics[scale=.2]{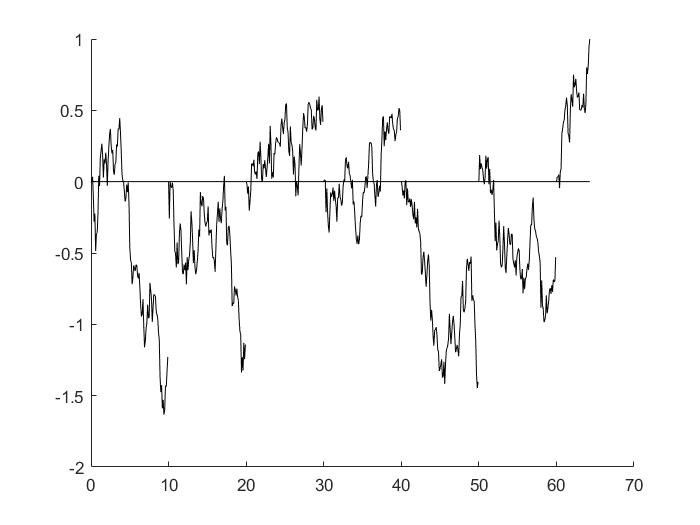}
\caption{Periodic instantaneous reset; $T=10,\ a=1$ }
\end{figure}

 As above, fix  once and for all the diffusion coefficient $D>0$.
Recall that the one-dimensional Brownian bridge with bridge time interval $T$ is the one-dimensional Brownian motion conditioned to be at the origin at time $T$. For background on this process, see
for example, \cite{KS,RW}.
 As is well-known \cite{KS, RW}, a one-dimensional Brownian bridge
 with bridge time interval $T$ and diffusion coefficient $D$ can be represented as $\big(W(t)-\frac tTW(T)\big),\ 0\le t\le T$, where $W(\cdot)$ is a one-dimensional Brownian motion with diffusion coefficient $D$. The $d$-dimensional Brownian bridge with bridge time interval $T$ and diffusion coefficient $D$ is the process in $\mathbb{R}^d$ whose components are independent one-dimensional Brownian bridges with bridge time interval $T$ and diffusion coefficient $D$.
For $T>0$, let
$\{B_n^{\text{bb},d;T}(t),0\le t\le T\}_{n=1}^\infty$ be a sequence of independent  $d$-dimensional Brownian bridges with bridge time interval $T$ and diffusion coefficient $D$.
 Define the Brownian bridge reset search process $X^{\text{bb},d;T}(\cdot)$ by
\begin{equation}\label{Xbb}
X^{\text{bb},d;T}(t)=B_n^{\text{bb},d;T}(t-nT),\ t\in[nT,(n+1)T),\ n=0,1,2,\cdots.
\end{equation}

Similarly, let $\{B_n^{d}(t),0\le t\le T\}_{n=1}^\infty$ be a sequence of independent $d$-dimensional Brownian motions with diffusion coefficient $D$.
Define the periodic  instantaneous
reset search process $X^{d;T}(\cdot)$ by
\begin{equation}\label{Xinhomoginstant}
X^{d;T}(\cdot)=B_n^d(t-nT),\ t\in[nT,(n+1)T),\ n=0,1,2,\cdots.
\end{equation}

For the processes $X^{\text{bb},d;T}$ and $X^{d;T}$,
we consider $T$, the time interval between resets, to be a parameter that can be varied, just as we consider the rate $r$ of the exponential clock  a parameter that can be varied for the
process $X^{d;r}$.
Figures 1-3 present a simulation for each of the three search processes with target $a=1$.

Let $\mu^{\text{Gauss},d}_{\sigma^2}$ denote the centered Gaussian distribution on $\mathbb{R}^d$ with variance $\sigma^2$.
Under the three search processes we have defined, we wish to calculate the expected time to locate the target; namely,
\begin{equation}\label{all3}
\begin{aligned}
&\int_{\mathbb{R}^d}E_0^{d;r}\tau_a\thinspace\mu^{\text{Gauss},d}_{\sigma^2}(da), \  \ \int_{\mathbb{R}^d}E_0^{\text{bb},d;T}\tau_a\thinspace\mu^{\text{Gauss},d}_{\sigma^2}(da)\\
&\text{and}\
\int_{\mathbb{R}^d}E_0^{d;T}\tau_a\thinspace\mu^{\text{Gauss},d}_{\sigma^2}(da).
\end{aligned}
\end{equation}
Then we compare the efficiency of the processes by
calculating the infimum (over $T$ or $r$ as appropriate)  of the above expected time to locate the target.

Of course, the first step is to evaluate
$E_0^{d;r}\tau_a$, $E_0^{\text{bb},d;T}\tau_a$ and $E_0^{d;T}\tau_a$, for fixed $a$. Expressions for $E_0^{d;r}\tau_a$ and $E_0^{d;T}\tau_a$ appear in the literature, as will be noted in sections
\ref{exp-a-timehomog} and \ref{exp-a-inhomog}.
It turns out that in dimensions $d=1$ and $d=3$, closed form formulas or close to closed form formulas exist for  these expressions. We will show in this paper that the same is true for
$E_0^{\text{bb},d;T}\tau_a$.
A reasonably closed form formula in dimension $d=2$ is known for the first  of these three expressions. As will be seen below in sections \ref{exp-a-bb} and \ref{exp-a-inhomog},
in order to get reasonably nice formulas for the second and third expressions, one needs to
have a reasonably nice formula for the probability density  of $\tau_a$ under standard $d$-dimensional Brownian motion (actually, sub-density in dimensions $d\ge3$, since in these dimensions $\tau_a=\infty$
with positive probability).
One can obtain this density (or sub-density, as the case may be)
from  Theorem 2.2 in \cite{HM}. Whereas the formula is nice in dimensions one and three, it is quite  unwieldy in dimension two.
In light of this, we are able
to calculate in a reasonably  explicit way all three expressions in \eqref{all3} in dimensions $d=1$ and $d=3$, but only the first of these three expressions in dimension $d=2$.
Thus, in the sequel, these will be the expressions we study.

The results  for the above expectations for fixed $a$ will be presented in sections \ref{exp-a-timehomog}--\ref{exp-a-inhomog}. It turns out that
$E_0^{d;r}\tau_a$ grows exponentially in $a$,
thus the corresponding expression in \eqref{all3} is finite for all choices of $D,r$ and $\sigma^2$. However,
 each of the other two expectations grows on the order $e^{C(D.T)\thinspace a^2}$ for an appropriate constant $C(D,T)$, and as will be seen, the corresponding expressions in \eqref{all3} are finite
  only for appropriate values of $D, T$ and $\sigma^2$.
(We note that \cite{Pin20} considered a Poissonian instantaneous reset process in one-dimension in which the exponential reset clock is replaced by a spatially dependent clock. By appropriate choice of the spatial dependence, it is shown there that the expected value of $\tau_a$ can be made to grow on the order
$|a|^{2+\delta}$, for any $\delta>0$.)

As will  be seen, with regard to the parameter $D$, the diffusion coefficient of the search process, and the parameter
$\sigma^2$, the variance of the target distribution,
in dimension $d=1$, all three expressions in \eqref{all3} scale in a standard way; namely as $\frac{\sigma^2}D$. However, in dimension $d=3$, we obtain the
  anomalous scaling $\frac{\sigma^3}D$.
This, of course,  leads one to wonder what scaling occurs in dimension $d=2$. We will show that in dimension $d=2$, the first expression  in
\eqref{all3} has the standard scaling obtained in the one-dimensional case.
Indeed, this renders the scaling obtained in the three-dimensional case all the more anomalous.

We now state our main results; namely, the explicit calculations of the expressions in \eqref{all3} and the corresponding infima over $r$ or $T$, as appropriate, for dimensions $d=1$ and
$d=3$, and also  for dimension $d=2$ in the case of  the first of the three expressions in \eqref{all3}.
We begin with the
one-dimensional case.
 Here is the result for the Poissonian  instantaneous reset search  process.
\begin{theorem}\label{resetGauss}
\begin{equation}\label{expGauss}
\int_{\mathbb{R}}\big(E_0^{(1;r)}\tau_a\big)\mu^{\text{Gauss},1}_{\sigma^2}(da)=\frac1r\big(2e^{\frac{r\sigma^2}D}\int_{-\sqrt{\frac{2r}D}\sigma}^\infty \frac{e^{-\frac{x^2}2}}{\sqrt{2\pi}}dx-1\big).
\end{equation}
Equivalently, writing  $r=\frac D{\sigma^2}s$, with $s>0$,
\begin{equation}\label{equiv}
\int_{\mathbb{R}}\big(E_0^{(1;\frac D{\sigma^2}s)}\tau_a\big)\mu^{\text{Gauss},1}_{\sigma^2}(da)=\frac{\sigma^2}D\Big(\frac{2e^s \int_{-\sqrt{2s}}^\infty\frac{e^{-\frac{x^2}2}}{\sqrt{2\pi}}dx-1}s\Big).
\end{equation}
One has
\begin{equation}\label{graphcalc}
\inf_{r>0}\int_{\mathbb{R}}\big(E_0^{(1;r)}\tau_a\big)\mu^{\text{Gauss},1}_{\sigma^2}(da)\approx 3.548\frac{\sigma^2}D\ \ \ \text{with the infimum attained at} \ r\approx 0.491\frac D{\sigma^2}.
\end{equation}
\end{theorem}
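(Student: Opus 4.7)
The plan is to start from the explicit formula for the hitting-time expectation of a single fixed target $a\in\mathbb{R}$ under Poissonian instantaneous resetting, which in our convention (diffusion coefficient $D$, i.e.\ generator $\tfrac{D}{2}\partial_{xx}$) takes the well-known form
\begin{equation*}
E_0^{(1;r)}\tau_a=\frac1r\bigl(e^{|a|\sqrt{2r/D}}-1\bigr).
\end{equation*}
This identity, whose derivation via $E\tau_a^{\text{reset}}=E[\tau_a\wedge\mathrm{Exp}(r)]/E e^{-r\tau_a}$ is standard, will be recorded in Section~\ref{exp-a-timehomog}, so I will simply invoke it.

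Next, I would integrate this expression against the centered Gaussian density $\frac{1}{\sqrt{2\pi\sigma^{2}}}e^{-a^{2}/(2\sigma^{2})}$. Using the symmetry of the density in $a$, the integral collapses to twice the integral over $[0,\infty)$, so that
\begin{equation*}
\int_{\mathbb{R}}E_0^{(1;r)}\tau_a\,\mu^{\text{Gauss},1}_{\sigma^2}(da)=\frac1r\Bigl(\frac{2}{\sqrt{2\pi\sigma^{2}}}\int_{0}^{\infty}e^{a\sqrt{2r/D}-a^{2}/(2\sigma^{2})}\,da-1\Bigr).
\end{equation*}
Then I would complete the square in the exponent and substitute $x=a/\sigma-\sigma\sqrt{2r/D}$. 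The quadratic completion produces the factor $e^{\sigma^{2}r/D}$ (since $\tfrac12(\sigma\sqrt{2r/D})^{2}=\sigma^{2}r/D$), and the change of variable shifts the lower limit to $-\sigma\sqrt{2r/D}$, yielding \eqref{expGauss}. The equivalent form \eqref{equiv} is immediate: set $r=\tfrac{D}{\sigma^{2}}s$ and factor $\tfrac{\sigma^{2}}{D}$ out of $\tfrac{1}{r}$; the exponent and integration limit depend only on $s$, so all dependence on $D,\sigma$ reduces to the multiplicative $\sigma^{2}/D$, which confirms the anticipated one-dimensional scaling.

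Finally, for \eqref{graphcalc} I would work with the dimensionless function
\begin{equation*}
g(s):=\frac{2e^{s}\int_{-\sqrt{2s}}^{\infty}\frac{e^{-x^{2}/2}}{\sqrt{2\pi}}\,dx-1}{s},\qquad s>0,
\end{equation*}
whose infimum is the constant in front of $\sigma^{2}/D$. Using the standard Gaussian density, $\frac{d}{ds}\int_{-\sqrt{2s}}^{\infty}\phi(x)\,dx=\phi(\sqrt{2s})/\sqrt{2s}$, so $g'(s)$ has an explicit closed form combining the error-function term with $e^{s}/\sqrt{s}$ times $\phi(\sqrt{2s})=\frac{1}{\sqrt{2\pi}}e^{-s}$. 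Checking the endpoint behavior ($g(s)\to\infty$ both as $s\downarrow 0$, via Taylor expansion showing $g(s)\sim \tfrac{\sqrt{2/\pi}}{\sqrt{s}}+O(1)$, and as $s\to\infty$, where the exponential factor dominates), I conclude that a finite minimum is attained in the interior and is unique provided $g'$ has a unique positive zero. The final numerical values $s^{*}\approx 0.491$ and $g(s^{*})\approx 3.548$ are then obtained by straightforward numerical root-finding.

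The only step with genuine content is the square completion and change of variable; the main effort is really bookkeeping. The one place that could be considered a mild obstacle is justifying that the critical point found numerically is indeed the global minimizer — this I would address either by plotting $g$ or by a short monotonicity argument on $g'$ exploiting the convexity of $s\mapsto e^{s}/s$ relative to the Gaussian-integral term.
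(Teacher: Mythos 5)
Your proposal is correct and follows essentially the same route as the paper: invoke $E_0^{(1;r)}\tau_a=\frac1r\big(e^{\sqrt{2r/D}\,|a|}-1\big)$, integrate against the Gaussian density using symmetry, complete the square to obtain the shifted Gaussian integral with lower limit $-\sqrt{2r/D}\,\sigma$ and prefactor $e^{r\sigma^2/D}$, substitute $r=\frac{D}{\sigma^2}s$ to expose the $\frac{\sigma^2}{D}$ scaling, and obtain \eqref{graphcalc} numerically (the paper simply uses a graphing calculator at this last step). The only blemish is cosmetic: the small-$s$ expansion gives $g(s)\sim\frac{2}{\sqrt{\pi s}}+O(1)$ rather than $\sqrt{2/\pi}/\sqrt{s}+O(1)$, which does not affect the argument.
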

Here is the  corresponding result for the  Brownian bridge reset search process.
\begin{theorem}\label{bridgeGauss}
\begin{equation}\label{expbridgeGauss}
\int_{\mathbb{R}}\big(E_0^{\text{bb},1;T}\tau_a\big)\mu^{\text{Gauss},1}_{\sigma^2}(da)=\begin{cases} T(\frac{DT}{DT-4\sigma^2})^\frac12-T+\frac{T\sigma}{\sqrt{DT}+2\sigma},\ T>\frac{4\sigma^2}D;\\ \infty,\ T\le \frac{4\sigma^2}D.\end{cases}.
\end{equation}
Equivalently, writing $T=\frac{\sigma^2}D\mathcal{T}$, with $\mathcal{T}>0$,
\begin{equation}\label{equivagain}
\int_{\mathbb{R}}\big(E_0^{\text{bb},1;\frac{\sigma^2}D\mathcal{T}}\tau_a\big)\mu^{\text{Gauss},1}_{\sigma^2}(da)=
\begin{cases}\frac{\sigma^2}D\Big(\mathcal{T}(\frac{\mathcal{T}}{(\mathcal{T}-4)})^\frac12-\mathcal{T}+\frac {\mathcal{T}}{2+\sqrt{\mathcal{T}}}\Big), \mathcal{T}>4;\\ \infty, \mathcal{T}\le 4.\end{cases}
\end{equation}
One has
\begin{equation}\label{graphcalcagain}
\inf_{T>0}\int_{\mathbb{R}}\big(E_0^{\text{bb},1;T}\tau_a\big)\mu^{\text{Gauss},1}_{\sigma^2}(da)\approx4.847\frac{\sigma^2}D\ \text{with the infimum attained at}\ T\approx 10.136\frac{\sigma^2}D.
\end{equation}
\end{theorem}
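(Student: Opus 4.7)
The plan is to integrate the one-dimensional formula for $E_0^{\text{bb},1;T}\tau_a$ from Section \ref{exp-a-bb} against the centered Gaussian density $\mu^{\text{Gauss},1}_{\sigma^2}$. The starting point is the decomposition arising from the fact that successive Brownian bridge cycles are independent and the number of cycles needed to reach $a$ is geometric with success probability $q(a)=e^{-2a^2/(DT)}$ (the classical hitting-probability formula for a bridge from $0$ to $0$ on $[0,T]$). Writing $f_a(\cdot)$ for the first-passage density to $a$ inside one bridge, one gets
\begin{equation*}
E_0^{\text{bb},1;T}\tau_a \;=\; \frac{T}{q(a)} - T \;+\; \frac{1}{q(a)}\int_0^T t\, f_a(t)\,dt.
\end{equation*}

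I would handle the first piece directly: the Gaussian integral of $T/q(a)=T\,e^{2a^2/(DT)}$ against $(2\pi\sigma^2)^{-1/2}e^{-a^2/(2\sigma^2)}$ is a standard Gaussian and is finite precisely when $2/(DT)<1/(2\sigma^2)$, i.e.\ when $T>4\sigma^2/D$; a one-line computation produces $T\sqrt{DT/(DT-4\sigma^2)}$, which (after the $-T$) accounts for the first two summands in \eqref{expbridgeGauss} and for the threshold $T\le 4\sigma^2/D$ that gives infinity. So the sharp threshold is dictated entirely by this term and matches the Gaussian tail.

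The main labor is the remaining double integral. Using the standard Markov-property expression
\begin{equation*}
f_a(t) = \frac{|a|}{\sqrt{2\pi D t^3}}\sqrt{\frac{T}{T-t}}\,\exp\!\left(-\frac{a^2 T}{2Dt(T-t)}\right),
\end{equation*}
I would swap the order of integration (Fubini is legitimate on $T>4\sigma^2/D$ by the same quadratic bookkeeping) and do the $a$-integral first. The key algebraic identity
\begin{equation*}
\frac{T}{t(T-t)}-\frac{4}{T}=\frac{(T-2t)^2}{Tt(T-t)}
\end{equation*}
combines the $f_a(t)/q(a)$ exponent with the Gaussian exponent into a single quadratic in $a$. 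Using $\int_{\mathbb{R}}|a|e^{-\beta a^2}\,da=1/\beta$ reduces the $a$-integral to a simple rational expression in $t$, and after the substitution $v=\sqrt{(T-t)/t}$ (so $t=T/(1+v^2)$) the remaining one-dimensional integral becomes
\begin{equation*}
\frac{2\sigma T\sqrt{D}}{\pi}\int_0^\infty\frac{v^2\,dv}{(1+v^2)\bigl(\sigma^2 v^4+(DT-2\sigma^2)v^2+\sigma^2\bigr)}.
\end{equation*}

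The expected obstacle is the evaluation of this last rational integral. The quartic $\sigma^2 v^4+(DT-2\sigma^2)v^2+\sigma^2$ is a biquadratic in $v$ whose discriminant is $DT(DT-4\sigma^2)$, so it factors over $\mathbb{R}$ into two positive definite quadratics $(\sigma v^2+\alpha)(\sigma v^2+\beta)$ with $\alpha\beta=\sigma^2$ and $\alpha+\beta=DT/\sigma-2\sigma$ exactly in the regime $T>4\sigma^2/D$. A careful partial-fractions decomposition followed by the elementary integrals $\int_0^\infty dv/(v^2+c^2)=\pi/(2c)$ yields, after simplification (which is the messy part to keep under control), the clean closed form $T\sigma/(\sqrt{DT}+2\sigma)$; combined with the first piece this gives \eqref{expbridgeGauss}. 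The equivalent form \eqref{equivagain} follows by plugging in $T=(\sigma^2/D)\mathcal{T}$ and collecting terms, and \eqref{graphcalcagain} is then a numerical one-variable minimization on $(4,\infty)$ of the resulting function of $\mathcal{T}$, solvable by differentiation and root-finding to yield the stated approximate optimizer $\mathcal{T}\approx 10.136$ and optimal value $\approx 4.847\,\sigma^2/D$.
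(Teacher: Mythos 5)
Your proposal follows essentially the same route as the paper: integrate the formula of Proposition \ref{expectationa} against the Gaussian, treat the $T(e^{2a^2/(DT)}-1)$ piece directly (which, since the other piece is nonnegative, gives the threshold $T>\frac{4\sigma^2}D$ and the first two summands), then swap the order of integration in the remaining term and do the $a$-integral first, reducing to the same one-dimensional rational integral in $t$ that the paper evaluates via $x=\sqrt{t(T-t)}$, $x=\tfrac T2\sin\theta$ and $\tan\theta=s$ instead of your $v=\sqrt{(T-t)/t}$ with biquadratic factoring and partial fractions; both evaluations give $\frac{T\sigma}{\sqrt{DT}+2\sigma}$. One small slip: the prefactor of your displayed $v$-integral should be $\frac{2T^{3/2}\sigma\sqrt D}{\pi}$ rather than $\frac{2\sigma T\sqrt D}{\pi}$ (a dimension count, or a numerical check at $D=\sigma=1$, $T=8$, confirms this), and with that correction your partial-fraction evaluation does yield the stated closed form.
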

And here is the corresponding result for the periodic  instantaneous reset search process.
\begin{theorem}\label{inhomog1d}
\begin{equation}\label{expGaussinhomog1}
\begin{aligned}
&\int_{\mathbb{R}}\big(E_0^{(1;T)}\tau_a\big)\mu^{\text{Gauss},1}_{\sigma^2}(da)=\\
&\begin{cases}
\frac2{\sqrt{2\pi}}\int_0^\infty\frac{\int_0^T\frac1{t^{\frac12}}e^{-\frac{\sigma^2x^2}{2Dt}}dt}{\int_0^T\frac1{t^{\frac32}}e^{-\frac{\sigma^2x^2}{2Dt}}dt}\thinspace  e^{-\frac{x^2}2}dx+
T\Big(\frac{2\sqrt D}{\sigma}\int_0^\infty \frac1{\int_0^T\frac x{t^\frac32}e^{-\frac{\sigma^2x^2}{2Dt}}dt}\thinspace e^{-\frac{x^2}2} dx-1\Big),\ T>\frac{\sigma^2}D;\\
\infty,\ T\le\frac{\sigma^2}D.\end{cases}
%\frac1r\big(2e^{\frac{r\sigma^2}D}\int_{-\sqrt{\frac{2r}D}\sigma}^\infty \frac{e^{-\frac{x^2}2}}{\sqrt{2\pi}}dx-1\big).
\end{aligned}
\end{equation}
Equivalently, writing  $T=\frac{\sigma^2}D\mathcal{T}$, with $\mathcal{T}>0$,
\begin{equation}\label{equivagainn}
\begin{aligned}
&\int_{\mathbb{R}}\Big(E_0^{(1;\frac{\sigma^2}D\mathcal{T})}\tau_a\big)\mu^{\text{Gauss},1}_{\sigma^2}(da)=\\
&\begin{cases}\frac{\sigma^2}D\Big(\frac2{\sqrt{2\pi}}\int_0^\infty
\frac{\int_0^\mathcal{T}\frac1{s^\frac12}e^{-\frac{x^2}{2s}}ds}{\int_0^\mathcal{T}\frac1{s^\frac32}e^{-\frac{x^2}{2s}}ds}\thinspace e^{-\frac{x^2}2} dx+2\mathcal{T}\int_0^\infty\frac1{\int_0^\mathcal{T}\frac x{s^\frac32}
e^{-\frac{x^2}{2s}}ds}\thinspace e^{-\frac{x^2}2}dx-\mathcal{T}\Big),\ \mathcal{T}>1,\\ \infty,\ \mathcal{T}\le1.\end{cases}
\end{aligned}
\end{equation}
One has
\begin{equation}\label{graphcalcagainn}
\inf_{T>0}\int_{\mathbb{R}}\big(E_0^{(1;T)}\tau_a\big)\mu^{\text{Gauss},1}_{\sigma^2}(da)\approx 3.35\frac{\sigma^2}D\ \ \ \text{with the infimum attained at} \ T\approx 2.82\frac{\sigma^2}D.
\end{equation}
\end{theorem}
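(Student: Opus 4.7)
The plan is to exploit the renewal-like structure of the periodic instantaneous reset process. For fixed $a\ne 0$, let $f_a(t)=\frac{|a|}{\sqrt{2\pi Dt^3}}e^{-a^2/(2Dt)}$ be the first-passage density to $a$ under $D$-Brownian motion, and let
$$
p(T,a)=\int_0^T f_a(t)\,dt
$$
be the probability that such a Brownian motion hits $a$ before time $T$. Since the process $X^{1;T}$ consists of i.i.d.\ Brownian pieces of length $T$ restarting at $0$, the index $N\ge 0$ of the first period in which $a$ is hit is geometric with success parameter $p(T,a)$, and conditional on $N=n$, the time in the successful period has the law of the hitting time of $a$ conditioned to be at most $T$. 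Thus
$$
E_0^{(1;T)}\tau_a=\frac{(1-p(T,a))T}{p(T,a)}+\frac{1}{p(T,a)}\int_0^T t\,f_a(t)\,dt=\frac{T}{p(T,a)}-T+\frac{\int_0^T t^{-1/2}e^{-a^2/(2Dt)}\,dt}{\int_0^T t^{-3/2}e^{-a^2/(2Dt)}\,dt},
$$
where the cancellation of the factor $\frac{|a|}{\sqrt{2\pi D}}$ between numerator and denominator of the last fraction is the key algebraic simplification.

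Next I would integrate this expression against $\mu^{\text{Gauss},1}_{\sigma^2}(da)=\frac{1}{\sigma\sqrt{2\pi}}e^{-a^2/(2\sigma^2)}\,da$. The ratio term is an even function of $a$, so the substitution $a=\sigma x$ together with symmetry gives the first summand in \eqref{expGaussinhomog1}. For the constant $-T$ and the $T/p(T,a)$ term, write
$$
p(T,a)=\frac{|a|}{\sqrt{2\pi D}}\int_0^T t^{-3/2}e^{-a^2/(2Dt)}\,dt,
$$
again apply symmetry and the substitution $a=\sigma x$ (with $x>0$), and collect the prefactor $\frac{\sqrt{2\pi D}}{\sigma}$ together with the factor $\frac{2}{\sqrt{2\pi}}$ produced by the Gaussian density; this yields exactly the bracketed second summand $T\bigl(\tfrac{2\sqrt D}{\sigma}\int_0^\infty(\,\cdots\,)e^{-x^2/2}\,dx-1\bigr)$ in \eqref{expGaussinhomog1}. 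The equivalent rescaled form \eqref{equivagainn} then follows from the elementary substitution $T=\frac{\sigma^2}{D}\mathcal T$ and $t=\frac{\sigma^2}{D}s$ inside the inner integrals.

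The most delicate point is the dichotomy $T>\sigma^2/D$ versus $T\le\sigma^2/D$. Here I would carry out a Laplace-type analysis of $p(T,a)$ for large $|a|$: after the substitution $u=1/t$, one has $p(T,a)=\frac{|a|}{\sqrt{2\pi D}}\int_{1/T}^\infty u^{-1/2}e^{-a^2u/(2D)}\,du$, and a standard boundary Laplace estimate gives
$$
p(T,a)\sim \frac{\sqrt{2DT/\pi}}{|a|}\,e^{-a^2/(2DT)}\qquad\text{as }|a|\to\infty,
$$
so $T/p(T,a)$ grows like $|a|\,e^{a^2/(2DT)}$ up to an explicit constant. The Gaussian integral of this expression converges iff $\tfrac{1}{2DT}<\tfrac{1}{2\sigma^2}$, i.e.\ iff $T>\sigma^2/D$; in the complementary regime it diverges to $+\infty$, and the remaining (bounded) ratio term cannot compensate. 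Combined with the obvious finiteness of the ratio contribution for all $T$, this gives the stated dichotomy.

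Finally, the numerical statement \eqref{graphcalcagainn} is obtained by differentiating the one-variable function
$$
g(\mathcal T)=\frac{2}{\sqrt{2\pi}}\int_0^\infty\frac{\int_0^\mathcal T s^{-1/2}e^{-x^2/(2s)}\,ds}{\int_0^\mathcal T s^{-3/2}e^{-x^2/(2s)}\,ds}e^{-x^2/2}\,dx+2\mathcal T\int_0^\infty\frac{e^{-x^2/2}}{\int_0^\mathcal T x s^{-3/2}e^{-x^2/(2s)}\,ds}\,dx-\mathcal T
$$
on $(1,\infty)$, checking that $g(\mathcal T)\to\infty$ as $\mathcal T\downarrow 1$ and as $\mathcal T\to\infty$ (the latter is immediate since the last term is $-\mathcal T$ but the $T/p$ contribution grows at least linearly in $T$ after Gaussian averaging for $T\gg\sigma^2/D$), so the minimum is attained at an interior critical point, which is then located numerically. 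The main obstacle is the finiteness/asymptotic estimate described in the previous paragraph; once that Laplace bound is in hand, the rest is algebraic bookkeeping and a one-dimensional numerical minimization.
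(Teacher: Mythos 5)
Your proof is correct and follows essentially the same route as the paper: the renewal/geometric decomposition yielding the fixed-$a$ formula (this is the paper's Proposition \ref{expainhomog1}), Gaussian integration with the substitutions $a=\sigma x$, $T=\frac{\sigma^2}{D}\mathcal{T}$, $t=\frac{\sigma^2}{D}s$, and the large-$|a|$ asymptotics of $p(T,a)$ to obtain the dichotomy at $T=\frac{\sigma^2}{D}$ (your endpoint Laplace estimate is equivalent to the paper's use of the reflection-principle identity together with the standard Gaussian tail asymptotic, giving the same constant), followed by a one-dimensional numerical minimization. The only quibble is your parenthetical claim that the Gaussian-averaged $T/p$ contribution grows at least linearly in $T$ --- it actually grows like $\sqrt{T}$ --- but this affects only your (still correct) remark that the objective tends to infinity as $\mathcal{T}\to\infty$, a point the paper does not address since it locates the minimum purely numerically.
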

\medskip

\bf\noindent Conclusion for the one-dimensional case.\rm\
From Theorems \ref{resetGauss}--\ref{inhomog1d}, it follows that the appropriate scaling unit for the resetting rate $r$ in the case
of the Poissonian instantaneous reset search process is $\frac D{\sigma^2}$, and the appropriate scaling unit for the  time interval $T$ in the case of the Brownian bridge reset search process
and the periodic  instantaneous reset search process is
$\frac{\sigma^2}D$. In all of these cases, the corresponding expected time to locate the target is an appropriate constant times $\frac{\sigma^2}D$. This is a natural and expected scaling.

Comparing the two time-inhomogeneous processes, one with instantaneous jump resetting and one with continuous resetting via the Brownian bridge, one sees from
\eqref{graphcalcagain} and \eqref{graphcalcagainn} that the
optimal expected time to locate the target is about 37 percent longer for the continuous search process than for the one with instantaneous jumps.
It is not surprising that the Brownian bridge reset search process is the less efficient of the two, as it will never locate the target on its way back to the origin since it is going through territory it has already explored.
From \eqref{graphcalc} and \eqref{graphcalcagainn} one sees that the optimal expected time to locate the target is about 6 percent longer for the  Poissonian instantaneous reset search process than for the periodic  instantaneous reset search process. We note that in the context of a search for a \it deterministic \rm\ target, it was  shown in \cite{PR}  that the optimal expected time to locate the target for the periodic instantaneous reset  search process is always less than for the Poissonian instantaneous search process (and also less than for a generalization of the Poissonian search process where the jump rate is time dependent).
We see here that this advantage of the periodic case over the Poissonian case continues to hold for a centered Gaussian target distribution.
As will be  seen, the relative efficiencies of the three processes  in the three dimensional case are quite different from the one-dimensional case, and in particular, the above-noted phenomenon no longer holds.

The expected value of the time interval between resets in the Poissonian instantaneous reset search process is $\frac1r$. Thus, for the optimal value of $r$ in \eqref{graphcalc}, the expected value of the time interval between resets is about
$2.04\frac{\sigma^2}D$.
From \eqref{graphcalc} and \eqref{graphcalcagainn}, one sees that the optimal value of $T$ for the Brownian bridge reset search process  is about 3.6 times as large as that for the
periodic  instantaneous reset search process

\bigskip

We now turn to the corresponding results in three dimensions.
As will be seen in \eqref{3r}, \eqref{expectonept3d} and \eqref{expainhomog3form},
the expected hitting time $\tau_a$ of a point $a\in\mathbb{R}^3$ with $|a|>\epsilon_0$  depends on $\epsilon_0$, and diverges on the order
$\frac1{\epsilon_0}$ as $\epsilon_0\to0$, for all three of the search processes. Thus, in the case $d=3$, it is appropriate to multiply each of the expressions in \eqref{all3} by $\epsilon_0$ and then consider the  limit
as $\epsilon_0\to0$.
Here is the result for the Poissonian instantaneous reset search process.
\begin{theorem}\label{resetGauss3}
\begin{equation}\label{resetGauss3form}
\begin{aligned}
&\lim_{\epsilon_0\to0}\epsilon_0\int_{\mathbb{R}^3}\big(E_0^{(3;r)}\tau_a\big)\mu^{\text{Gauss},3}_{\sigma^2}(da)=
%\frac1r\int_{\mathbb{R}^3}|a|e^{\sqrt{\frac rD}|a|}\frac{e^{-\frac{|a|^2}{2\sigma^2}}}{(2\pi\sigma^2)^\frac32}da=\\
%&
\frac{2\sigma}{\sqrt{2\pi}\thinspace r}\int_0^\infty x^3e^{\sqrt{\frac rD}\thinspace\sigma x}e^{-\frac{x^2}2}dx.
\end{aligned}
\end{equation}
Equivalently, writing $r=\frac{D}{\sigma^2}s$, with $s>0$,
\begin{equation}\label{resetGauss3formagain}
\lim_{\epsilon_0\to0}\epsilon_0\int_{\mathbb{R}^3}\big(E_0^{(3;r)}\tau_a\big)\mu^{\text{Gauss},3}_{\sigma^2}(da)=\frac{\sigma^3}D\Big(\frac2{\sqrt{2\pi}s}\int_0^\infty x^3e^{\sqrt s\thinspace x}e^{-\frac{x^2}2}dx\Big).
\end{equation}
One has
\begin{equation}\label{graphiccalc3}
\begin{aligned}
&\inf_{r>0}\lim_{\epsilon_0\to0}\epsilon_0\int_{\mathbb{R}^3}\big(E_0^{(3;r)}\tau_a\big)\mu^{\text{Gauss},3}_{\sigma^2}(da)\approx13.09\frac{\sigma^3}D,\\
&\text{with the infinmum attained at }\ r\approx 0.738\frac{D}{\sigma^2}.
\end{aligned}
\end{equation}

\end{theorem}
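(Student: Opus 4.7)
The plan is to plug the closed-form expression for $E_0^{(3;r)}\tau_a$ from \eqref{3r}, which is the standard result for the Poissonian instantaneous reset process in three dimensions and has the form
\begin{equation*}
E_0^{(3;r)}\tau_a=\frac1r\Big(\frac{|a|}{\epsilon_0}e^{\sqrt{r/D}\,(|a|-\epsilon_0)}-1\Big),\qquad |a|>\epsilon_0,
\end{equation*}
directly into the integral $\int_{\mathbb{R}^3} E_0^{(3;r)}\tau_a\,\mu^{\text{Gauss},3}_{\sigma^2}(da)$, multiply by $\epsilon_0$, and pass to the limit. Since both the expectation and the Gaussian density depend only on $|a|$, I would first switch to spherical coordinates, writing $\rho=|a|$ and $da=4\pi\rho^2\,d\rho$, which reduces the three-dimensional integral to a one-dimensional integral against $\rho^2 e^{-\rho^2/(2\sigma^2)}$.

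Next I would perform the $\epsilon_0\to 0$ limit inside the integral. After multiplying by $\epsilon_0$, the integrand on $\{|a|>\epsilon_0\}$ becomes $\tfrac{\rho}{r}e^{\sqrt{r/D}(\rho-\epsilon_0)}-\tfrac{\epsilon_0}{r}$, which converges pointwise to $\tfrac{\rho}{r}e^{\sqrt{r/D}\,\rho}$ and is dominated uniformly in $\epsilon_0\in(0,1]$ by the $\rho$-integrable (against the Gaussian) majorant $\tfrac{\rho}{r}e^{\sqrt{r/D}\,\rho}$ itself; dominated convergence then yields
\begin{equation*}
\lim_{\epsilon_0\to 0}\epsilon_0\!\int_{\mathbb{R}^3}\!E_0^{(3;r)}\tau_a\,\mu^{\text{Gauss},3}_{\sigma^2}(da)
=\frac{4\pi}{r(2\pi\sigma^2)^{3/2}}\int_0^\infty \rho^3 e^{\sqrt{r/D}\,\rho}e^{-\rho^2/(2\sigma^2)}\,d\rho.
\end{equation*}
The substitution $x=\rho/\sigma$ turns the constant prefactor into $\tfrac{2\sigma}{r\sqrt{2\pi}}$ and produces exactly \eqref{resetGauss3form}. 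The scaled version \eqref{resetGauss3formagain} follows by setting $r=\tfrac{D}{\sigma^2}s$ and factoring $\sigma^3/D$ from the result, at which point the remaining integrand depends only on the dimensionless parameter $s$.

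Finally, for \eqref{graphiccalc3} I would define $f(s)=\tfrac{2}{\sqrt{2\pi}\,s}\int_0^\infty x^3 e^{\sqrt{s}\,x}e^{-x^2/2}\,dx$ and verify that the infimum is attained at an interior point of $(0,\infty)$. This requires showing $f(s)\to\infty$ at both endpoints: as $s\to 0^+$, the $1/s$ factor blows up while the integral tends to the finite value $\int_0^\infty x^3 e^{-x^2/2}dx=2$; as $s\to\infty$, completing the square in the exponent gives $\int_0^\infty x^3 e^{\sqrt{s}x-x^2/2}dx\sim \sqrt{2\pi}\,s^{3/2}e^{s/2}$, so $f(s)\sim 2\sqrt{s}\,e^{s/2}\to\infty$. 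Continuity and differentiability of $f$ (justified by differentiation under the integral sign) then guarantee a minimizer, whose location $s^*\approx 0.738$ and value $f(s^*)\approx 13.09$ are obtained numerically. The main obstacle is the justification of the limit interchange (dominated convergence with a uniform-in-$\epsilon_0$ majorant); the rest is bookkeeping and a one-variable numerical minimization.
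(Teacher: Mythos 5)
Your proposal is correct and follows essentially the same route as the paper: substitute the known formula \eqref{3r}, multiply by $\epsilon_0$, pass the limit inside the Gaussian integral (the paper uses monotone convergence where you use dominated convergence with the same majorant--both are valid since the integrand is nonnegative and increasing as $\epsilon_0\downarrow 0$), then reduce to a radial integral, rescale via $x=\rho/\sigma$ and $r=\frac{D}{\sigma^2}s$, and minimize numerically. Your added endpoint asymptotics justifying an interior minimizer are a small bonus over the paper's purely numerical treatment, but the argument is the same.
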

Here is the  corresponding result for the  Brownian bridge reset search process.
\begin{theorem}\label{bridgeGauss3}
\begin{equation}\label{expbridgeGauss3}
\lim_{\epsilon_0\to0}\epsilon_0\int_{\mathbb{R}^3}\big(E_0^{\text{bb},3;T}\tau_a\big)\mu^{\text{Gauss},3}_{\sigma^2}(da)=\begin{cases}
\frac{2T^3D^2\sigma}{\sqrt{2\pi}(DT-4\sigma^2)^2},\ T>\frac{4\sigma^2}D;\\ \infty,\ T\le \frac{4\sigma^2}D.\end{cases}.
\end{equation}
Equivalently, writing $T=\frac{\sigma^2}D\mathcal{T}$,
\begin{equation}\label{expbridgeGauss3again}
\lim_{\epsilon_0\to0}\epsilon_0\int_{\mathbb{R}^3}\big(E_0^{\text{bb},3;T}\tau_a\big)\mu^{\text{Gauss},3}_{\sigma^2}(da)=\begin{cases}
\frac{\sigma^3}D\Big(\frac{2\mathcal{T}^3}{\sqrt{2\pi}(\mathcal{T}-4)^2}\Big),\ \mathcal{T}>4;\\ \infty,\ \mathcal{T}\le 4.\end{cases}
\end{equation}
One has
\begin{equation}\label{graphiccalc3again}
\begin{aligned}
&\inf_{T>0}\lim_{\epsilon_0\to0}\epsilon_0\int_{\mathbb{R}^3}\big(E_0^{\text{bb},3;T}\tau_a\big)\mu^{\text{Gauss},3}_{\sigma^2}(da)\approx
21.54\frac{\sigma^3}D,\\
&\text{with the infimum attained at}\ T=12.00\frac{\sigma^2}D.
\end{aligned}
\end{equation}
\end{theorem}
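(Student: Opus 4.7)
The strategy is to leverage the fixed-target formula for $E_0^{\text{bb},3;T}\tau_a$ derived in section \ref{exp-a-bb} and to pass to the $\epsilon_0\to0$ limit under the Gaussian integral. The Brownian bridge reset process is a renewal of independent bridge excursions of length $T$, so if $p(a)=P_0^{\text{bb},3;T}(\tau_a\le T)$ is the probability of a successful single cycle, then
\[
E_0^{\text{bb},3;T}\tau_a=\frac{T(1-p(a))}{p(a)}+E_0^{\text{bb},3;T}[\tau_a\mid \tau_a\le T]=\frac{T}{p(a)}+O(T),
\]
and since $p(a)=O(\epsilon_0)$ the $\epsilon_0\to0$ leading term of $\epsilon_0 E_0^{\text{bb},3;T}\tau_a$ is $\epsilon_0 T/p(a)$. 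A strong-Markov decomposition at the first entry of the $\epsilon_0$-ball, combined with the bridge transition density $(2\pi D t(T-t)/T)^{-3/2}\exp(-|a|^2T/(2Dt(T-t)))$ at the hit point (where $|B(\tau_a)|\to|a|$ as $\epsilon_0\to 0$) and the $3$-dimensional Bessel first-passage asymptotics (equivalently \cite[Thm.~2.2]{HM}), yields
\[
p(a)\;\sim\;\epsilon_0\,\frac{T^{3/2}}{\sqrt{2\pi D}}\int_0^T\!\!\frac{dt}{[t(T-t)]^{3/2}}\,\exp\!\Big(-\frac{|a|^2 T}{2Dt(T-t)}\Big)\qquad(\epsilon_0\to0).
\]

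The inner integral evaluates in closed form: by the symmetry $t\mapsto T-t$ and the substitutions $u=t(T-t)/T$ followed by $v=1/(Tu)$, it reduces to a standard Gaussian-tail integral equal to $(2\sqrt{2\pi D}/(|a| T^{3/2}))e^{-2|a|^2/(DT)}$. Hence $p(a)\sim (2\epsilon_0/|a|)e^{-2|a|^2/(DT)}$ and therefore $\epsilon_0 E_0^{\text{bb},3;T}\tau_a\to(T|a|/2)e^{2|a|^2/(DT)}$ pointwise in $a\neq 0$. Integrating this pointwise limit against $(2\pi\sigma^2)^{-3/2}e^{-|a|^2/(2\sigma^2)}$ in spherical coordinates yields a finite value iff the net quadratic exponent $\alpha:=1/(2\sigma^2)-2/(DT)=(DT-4\sigma^2)/(2DT\sigma^2)$ is strictly positive, i.e.\ $T>4\sigma^2/D$; using $\int_0^\infty r^3 e^{-\alpha r^2}dr=1/(2\alpha^2)$ the expression in \eqref{expbridgeGauss3} falls out directly. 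The main technical point I anticipate is justifying the exchange of $\lim_{\epsilon_0\to0}$ and the Gaussian integral by dominated convergence, via an $\epsilon_0$-uniform majorant $\epsilon_0/p(a)\le C(D,T)|a|e^{2|a|^2/(DT)}$ that is integrable exactly in the regime $T>4\sigma^2/D$.

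Writing $T=\mathcal T\sigma^2/D$ reduces the optimization in \eqref{graphiccalc3again} to minimizing $g(\mathcal T)=\mathcal T^3/(\mathcal T-4)^2$ over $\mathcal T>4$. Logarithmic differentiation gives $g'(\mathcal T)/g(\mathcal T)=3/\mathcal T-2/(\mathcal T-4)$, whose unique zero in $(4,\infty)$ is $\mathcal T=12$, with $g(12)=1728/64=27$. Multiplying by the prefactor $2/\sqrt{2\pi}$ yields the optimal value $(54/\sqrt{2\pi})\sigma^3/D\approx 21.54\,\sigma^3/D$, attained at $T=12\sigma^2/D$.
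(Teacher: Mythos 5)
Your overall route coincides with the paper's: the renewal identity $E_0^{\text{bb},3;T}\tau_a=T(1-p(a))/p(a)+E_0^{\text{bb},3;T}[\tau_a\mid\tau_a\le T]$, the single--bridge hitting asymptotics $p(a)\sim\frac{2\epsilon_0}{|a|}e^{-2|a|^2/(DT)}$ obtained from the Hamana--Matsumoto density together with the closed-form evaluation of $\int_0^T[t(1-\tfrac tT)]^{-3/2}e^{-b^2/(2Dt(1-t/T))}dt$ (this is exactly \eqref{integral3232}), the pointwise limit $\epsilon_0E_0^{\text{bb},3;T}\tau_a\to\frac{T|a|}2e^{2|a|^2/(DT)}$, the Gaussian integral in spherical coordinates, and the condition $T>4\sigma^2/D$ all match \eqref{everywhereconv}--\eqref{byparts}. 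Your exact minimization of $\mathcal T^3/(\mathcal T-4)^2$ at $\mathcal T=12$ with value $54/\sqrt{2\pi}\approx21.54$ is cleaner than the paper's numerical treatment and confirms \eqref{graphiccalc3again}.

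The genuine gap is the step you yourself flag: the interchange of $\lim_{\epsilon_0\to0}$ with the Gaussian integral. The majorant you propose, $\epsilon_0/p(a)\le C(D,T)|a|e^{2|a|^2/(DT)}$ uniformly in $0<\epsilon_0<|a|$, does not follow from the bounds your argument produces. The two-sided density estimates (the analogue of \eqref{maxminprob}) only give $\epsilon_0/p(a)\le\frac{|a|}2\,\frac{|a|+\epsilon_0}{|a|-\epsilon_0}\,e^{2(|a|+\epsilon_0)^2/(DT)}$, and this degenerates in two ways. First, the factor $\frac{|a|+\epsilon_0}{|a|-\epsilon_0}$ blows up as $|a|\downarrow\epsilon_0$; since dominated convergence requires a bound valid for \emph{all} admissible $\epsilon_0<|a|$, including $\epsilon_0$ close to $|a|$ when $|a|$ is small, the asymptotic $p(a)\sim 2\epsilon_0|a|^{-1}e^{-2|a|^2/(DT)}$ (which is not uniform in that regime) cannot be cited here. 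The paper needs a separate argument for exactly this region, namely \eqref{unifbdd}, proved by the crude but uniform lower bound $p(a)\ge c(D,T)\epsilon_0$ obtained in \eqref{paest} by integrating the density lower bound over $t\in[(|a|-\epsilon_0)^2,T/2]$; some such estimate must be supplied. Second, for large $|a|$ the exponent is $2(|a|+\epsilon_0)^2/(DT)$, not $2|a|^2/(DT)$, so a correct uniform majorant (say for $\epsilon_0\le\bar\epsilon$) carries an extra factor $e^{4\bar\epsilon|a|/(DT)}$; this is harmless for integrability when $T>4\sigma^2/D$, but your majorant as literally stated is false and should be adjusted. Finally, for the divergent case $T\le4\sigma^2/D$ you should invoke Fatou's lemma (or, as the paper does, monotone convergence applied to the lower bound in \eqref{expectonept3d}) rather than only observing that the limiting integrand is non-integrable. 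With these repairs the proof is complete and is essentially the paper's.
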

And  here is the corresponding result for the periodic  instantaneous reset search process.

\begin{theorem}\label{inhomog3d}
\begin{equation}\label{expGaussinhomog3d}
\begin{aligned}
&\lim_{\epsilon_0\to0}\epsilon_0\int_{\mathbb{R}^3}\big(E_0^{(3;T)}\tau_a\big)\mu^{\text{Gauss},3}_{\sigma^2}(da)=
\begin{cases} 2\sqrt DT\int_0^\infty \frac1{\int_0^T\frac1{t^{\frac32}}e^{-\frac{\sigma^2 x^2}{2Dt}}dt}\thinspace x^2e^{-\frac{x^2}2}dx,\ T>\frac{\sigma^2}D,\\
\infty,\  T\le \frac{\sigma^2}D.\end{cases}
\end{aligned}
\end{equation}

Equivalently, writing $T=\frac{\sigma^2}D\mathcal{T}$, with $\mathcal{T}>0$,
\begin{equation}\label{equivinhomog3d}
\lim_{\epsilon_0\to0}\epsilon_0\int_{\mathbb{R}^3}\big(E_0^{(3;T)}\tau_a\big)\mu^{\text{Gauss},3}_{\sigma^2}(da)=\begin{cases}
\frac{\sigma^3}D\Big(2\mathcal{T}\int_0^\infty
 \frac1{\int_0^\mathcal{T}\frac1{s^{\frac32}}e^{-\frac{x^2}{2s}}ds}\thinspace x^2e^{-\frac{x^2}2}\Big)dx,\ \mathcal{T}>1,\\
\infty, \ \mathcal{T}\le 1.
\end{cases}
\end{equation}
One has
\begin{equation}\label{graphiccalc3againn}
\begin{aligned}
&\inf_{T>0}\lim_{\epsilon_0\to0}\epsilon_0\int_{\mathbb{R}^3}\big(E_0^{(3;T)}\tau_a\big)\mu^{\text{Gauss},3}_{\sigma^2}(da)\approx22.775\frac{\sigma^3}D,\\
&\text{with the infinmum attained at }\ T\approx 4.13\frac{\sigma^2}D.
\end{aligned}
\end{equation}

\end{theorem}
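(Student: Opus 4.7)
The plan is to build on the closed-form expression for the fixed-target expectation $E_0^{(3;T)}\tau_a$ derived in Section~\ref{exp-a-inhomog}, then integrate against the centered Gaussian on $\mathbb{R}^3$ and pass to the $\epsilon_0\to 0$ limit. Because the periodic process consists of i.i.d.\ Brownian excursions of duration $T$ emanating from the origin, the number of unsuccessful cycles before the target is located is geometric, which yields
\begin{equation*}
E_0^{(3;T)}\tau_a = \frac{\int_0^T P(\eta_a > t)\,dt}{P(\eta_a \le T)},
\end{equation*}
where $\eta_a$ denotes the hitting time of the $\epsilon_0$-ball around $a$ by a standard $3$-dimensional Brownian motion of diffusion coefficient $D$ starting at the origin. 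For $|a|>\epsilon_0$, the sub-density of $\eta_a$ (from Theorem~2.2 of \cite{HM}, or equivalently by inverting the $3$-dimensional Bessel Laplace transform) is
\begin{equation*}
f_a(t) = \frac{\epsilon_0(|a|-\epsilon_0)}{|a|\sqrt{2\pi D\,t^3}}\,e^{-(|a|-\epsilon_0)^2/(2Dt)}.
\end{equation*}

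To extract the $\epsilon_0 \to 0$ scaling, I note that $f_a$ is linear in $\epsilon_0$ to leading order, so $P(\eta_a \le T) = O(\epsilon_0)$ while $\int_0^T P(\eta_a > t)\,dt \to T$. Dividing yields the pointwise limit
\begin{equation*}
\epsilon_0\,E_0^{(3;T)}\tau_a \longrightarrow \frac{T\sqrt{2\pi D}}{\int_0^T t^{-3/2}\, e^{-|a|^2/(2Dt)}\,dt}.
\end{equation*}
Switching to spherical coordinates with $|a|=\sigma x$ and collecting the Jacobian $4\pi\rho^2/(2\pi\sigma^2)^{3/2}$, the numerical prefactors collapse to $2\sqrt{D}T$, yielding \eqref{expGaussinhomog3d}; the scaled form \eqref{equivinhomog3d} then follows by the substitution $t=(\sigma^2/D)s$ inside the inner integral. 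Interchange of the limit with the Gaussian integration is handled by dominated convergence: the crude bound $\epsilon_0 E_0^{(3;T)}\tau_a \le \epsilon_0 T/P(\eta_a\le T)$ together with $(|a|-\epsilon_0)^2 \le |a|^2$ gives $\epsilon_0 E_0^{(3;T)}\tau_a \le 2T\sqrt{2\pi D}\big/\int_0^T s^{-3/2} e^{-|a|^2/(2Ds)}\,ds$ uniformly in small $\epsilon_0$, which is Gaussian-integrable exactly under the condition identified below.

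The convergence/divergence dichotomy at $T=\sigma^2/D$ is the main analytical content. Setting $I(x,T)=\int_0^T t^{-3/2} e^{-\sigma^2 x^2/(2Dt)}\,dt$, the substitution $u=\sigma^2 x^2/(2Dt)$ rewrites $I$ as $\sqrt{2D}(\sigma x)^{-1}\int_{\sigma^2 x^2/(2DT)}^\infty u^{-1/2} e^{-u}\,du$, and standard tail asymptotics for the incomplete gamma give $I(x,T) \sim (2\sqrt{DT}/(\sigma^2 x^2))\,e^{-\sigma^2 x^2/(2DT)}$ as $x\to\infty$. Consequently the integrand in \eqref{expGaussinhomog3d} behaves like a constant multiple of $x^4\exp\bigl(-x^2(\tfrac12-\tfrac{\sigma^2}{2DT})\bigr)$, integrable if and only if $T>\sigma^2/D$, giving the stated dichotomy. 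The numerical infimum \eqref{graphiccalc3againn} is then obtained by graphical minimization of the scaled function in \eqref{equivinhomog3d} over $\mathcal{T}>1$. I expect the main technical obstacle to be the uniform-in-$\epsilon_0$ bookkeeping needed for dominated convergence when $|a|$ is close to $\epsilon_0$, where the pointwise limit itself becomes singular; this is handled by treating the region $|a|\le 2\epsilon_0$ separately via the trivial bound $E_0^{(3;T)}\tau_a=0$ for $|a|\le\epsilon_0$ and a direct estimate on the thin annulus $\epsilon_0<|a|\le 2\epsilon_0$.
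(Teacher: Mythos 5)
Your proposal is correct and follows essentially the same route as the paper: your compact renewal formula $E_0^{(3;T)}\tau_a=\int_0^T P(\eta_a>t)\,dt\,/\,P(\eta_a\le T)$ is an algebraic repackaging of Proposition \ref{expainhomog3}, and the subsequent steps (pointwise $\epsilon_0\to0$ limit under the Gaussian integral by dominated convergence, spherical coordinates with $|a|=\sigma x$, the substitution $t=\frac{\sigma^2}Ds$, incomplete-gamma tail asymptotics for the dichotomy at $T=\frac{\sigma^2}D$, and numerical minimization) mirror the paper's proof, with your separate treatment of the annulus $\epsilon_0<|a|\le 2\epsilon_0$ supplying the same uniformity the paper handles implicitly. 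The only blemishes are cosmetic: the constant in your asymptotic for $I(x,T)$ should read $2D\sqrt T/(\sigma^2x^2)$ rather than $2\sqrt{DT}/(\sigma^2x^2)$ (immaterial for the integrability criterion), and concluding that the left-hand side is actually $+\infty$ when $T\le\frac{\sigma^2}D$ requires one invocation of Fatou's lemma applied to the pointwise limit.
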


\bf\noindent Conclusion for the three-dimensional case.\rm\
From Theorems \ref{resetGauss3}--\ref{inhomog3d},
it follows that,  as in the one-dimensional case,  the appropriate scaling unit for the resetting rate $r$ in the case
of the Poissonian instantaneous reset search process is $\frac D{\sigma^2}$, and the appropriate scaling unit for the  time interval $T$ in the case of the Brownian bridge reset search process
and the periodic  reset search process is
$\frac{\sigma^2}D$.
 \it However, for all of these processes, the corresponding expected time to locate the target is an appropriate constant times  $\frac{\sigma^3}D$, as opposed to $\frac{\sigma^2}D$ in the one-dimensional case.\rm\
This seems to us to be an anomalous and surprising scaling.

Comparing the two time-inhomogeneous processes, one with instantaneous jump resetting and one with continuous resetting via the Brownian bridge, one sees from
\eqref{graphiccalc3again} and \eqref{graphiccalc3againn} that the
optimal expected time to locate the target is about 6 percent longer for the one with instantaneous jumps than for the continuous one,
which is completely different than in the one-dimensional case. Perhaps the explanation for this is that, unlike in the one-dimensional case,  in the three-dimensional case the Brownian bridge reset search process
can explore new territory on its way back to the origin.
From \eqref{graphiccalc3}, \eqref{graphiccalc3again} and \eqref{graphiccalc3againn}, one sees that the Poissonian instantaneous reset search process is by far the most efficient
one of the three. The expected time to locate the target is about 65 percent and 74 percent longer for the other two processes.
Note that the
periodic  instantaneous reset search process fares the worst in  three dimensions while faring the best in one dimension.

\medskip

In light of the anomalous
scaling in three dimensions, it is natural to wonder what occurs in two dimensions.
As noted above, we are able to handle  the two-dimensional Poissonian instantaneous reset search process. Theorem \ref{2dimcase} below shows that the scaling in this case is the natural $\frac{\sigma^2}D$ scaling.
As will be seen in the proof of  Theorem \ref{2dimcase}  in section \ref{pfthm7},
$E_0^{(2;T)}\tau_a$,  with $|a|>\epsilon_0$,  depends on $\epsilon_0$, and diverges on the order $|\log\epsilon_0|$ as $\epsilon_0\to0$.
Thus, in the case $d=2$, we multiply the first expression in \eqref{all3} by $\frac1{|\log\epsilon_0|}$ and then consider the limit as $\epsilon_0\to0$.
\begin{theorem}\label{2dimcase}
\begin{equation}\label{2dimformula}
\lim_{\epsilon\to0}\frac1{|\log\epsilon_0|}\int_{\mathbb{R}^2}(E_0^{(2;T)}\tau_a)\mu^{\text{Gauss},2}_{\sigma^2}(da)=\frac1r\int_0^\infty \frac1{K_0(\sqrt{\frac rD}\thinspace \sigma x)}xe^{-\frac{x^2}2}dx,
\end{equation}
where $K_0$ is the modified Bessel function of the second kind of order zero.

\noindent Equivalently, writing $r=\frac D{\sigma^2}s$,
\begin{equation}\label{equiv2dim}
\lim_{\epsilon\to0}\frac1{|\log\epsilon_0|}\int_{\mathbb{R}^2}(E_0^{(2;T)}\tau_a)\mu^{\text{Gauss},2}_{\sigma^2}(da)=\frac{\sigma^2}D\big(\frac1s\int_0^\infty\frac1{K_0(\sqrt s\thinspace x)}\thinspace xe^{-\frac{x^2}2}dx\big).
\end{equation}
One has
\begin{equation}\label{graphcalc2}
\inf_{r>0}\lim_{\epsilon\to0}\frac1{|\log\epsilon_0|}\int_{\mathbb{R}^2}(E_0^{(2;T)}\tau_a)\mu^{\text{Gauss},2}_{\sigma^2}(da)\approx 4.77\frac{\sigma^2}D,
\end{equation}
with the infimum attained at    $r\approx0.713\frac D{\sigma^2}$.
\end{theorem}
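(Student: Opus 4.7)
The plan is to start from the explicit formula for $E_0^{(2;r)}\tau_a$ with $|a|>\epsilon_0$ that will have been derived in Section~\ref{exp-a-timehomog}. The classical derivation lets $u(x)=E_x\tau_a$, which satisfies the renewal-type equation $(D\Delta-r)u=-1-r\,u(0)$ outside the $\epsilon_0$-ball around $a$, with $u=0$ on that sphere and $u$ bounded at infinity; since the only radially decaying solution of the modified Helmholtz equation $(D\Delta-r)\tilde u=0$ in $\mathbb{R}^2$ is $K_0(\alpha\rho)$ with $\alpha:=\sqrt{r/D}$, solving for the constant $u(0)$ yields
\begin{equation*}
E_0^{(2;r)}\tau_a \;=\; \frac{1}{r}\Bigl(\frac{K_0(\alpha\epsilon_0)}{K_0(\alpha|a|)} - 1\Bigr).
\end{equation*}

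Next I pass to polar coordinates in the integral against the rotationally symmetric density $(2\pi\sigma^2)^{-1}e^{-|a|^2/(2\sigma^2)}$; the angular integration absorbs the $2\pi$, and after the substitution $x=|a|/\sigma$ one gets
\begin{equation*}
\int_{\mathbb{R}^2}E_0^{(2;r)}\tau_a\,\mu^{\text{Gauss},2}_{\sigma^2}(da) \;=\; \frac{K_0(\alpha\epsilon_0)}{r}\int_{\epsilon_0/\sigma}^\infty \frac{x\,e^{-x^2/2}}{K_0(\alpha\sigma x)}\,dx \;-\;\frac{1}{r}.
\end{equation*}
Dividing by $|\log\epsilon_0|$ and sending $\epsilon_0\to 0$, the constant $-1/r$ term contributes nothing, the lower limit of the integral tends to $0$, and the well-known small-argument asymptotic $K_0(z)=-\log(z/2)-\gamma+o(1)$ as $z\to 0^+$ gives $K_0(\alpha\epsilon_0)/|\log\epsilon_0|\to 1$. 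This yields \eqref{2dimformula}; equation \eqref{equiv2dim} then follows from the scaling substitution $r=(D/\sigma^2)s$, and \eqref{graphcalc2} is a numerical minimization of the resulting one-variable function of $s$.

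The main technical point is justifying the interchange of limit and integration. The factor $K_0(\alpha\epsilon_0)/|\log\epsilon_0|$ stays bounded as $\epsilon_0\to 0$, so the integrand is dominated by $C\,x\,e^{-x^2/2}/K_0(\alpha\sigma x)$ uniformly for small $\epsilon_0$, and I need only verify this envelope is integrable on $(0,\infty)$. Near $x=0$ the denominator $K_0(\alpha\sigma x)\sim -\log x$ blows up, so the integrand is bounded (and in fact tends to $0$); at infinity the Bessel asymptotic $K_0(z)\sim\sqrt{\pi/(2z)}\,e^{-z}$ means $1/K_0(\alpha\sigma x)$ grows only like $\sqrt{x}\,e^{\alpha\sigma x}$, which is easily absorbed by the Gaussian $e^{-x^2/2}$. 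Dominated convergence then closes the argument.
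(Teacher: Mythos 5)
Your proposal is correct and follows essentially the same route as the paper's proof: start from the known formula $E_0^{(2;r)}\tau_a=\frac1r\bigl(\frac{K_0(\sqrt{r/D}\,\epsilon_0)}{K_0(\sqrt{r/D}\,|a|)}-1\bigr)$, use the small-argument asymptotics of $K_0$ to identify the $|\log\epsilon_0|$ normalization, justify the limit by dominated convergence using the logarithmic blow-up of $K_0$ at $0$ and its exponential decay at infinity against the Gaussian, then pass to polar coordinates, rescale $r=Ds/\sigma^2$, and minimize numerically. The only cosmetic difference is that you factor out $K_0(\sqrt{r/D}\,\epsilon_0)$ before taking the limit, whereas the paper takes the pointwise limit of $E_0^{(2;r)}\tau_a/|\log\epsilon_0|$ and applies dominated convergence over the Gaussian measure; the ingredients are identical.
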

\noindent \bf Remark.\rm\ Comparing \eqref{graphcalc} and \eqref{graphcalc2}, one sees that for the Poissonian instantaneous reset search process, the expected time to locate the target is about
34 percent longer in the two-dimensional case than in the one-dimensional case. Of course, there is no sense in making a comparison with the three-dimensional case since the scaling there is different.
\medskip

As already mentioned, the expectation of $\tau_a$ for fixed $a$, for all of the various cases is treated in sections \ref{exp-a-timehomog}--\ref{exp-a-inhomog}. These results are then used  to prove
Theorems \ref{resetGauss}-\ref{2dimcase}. The proofs of Theorems \ref{resetGauss}--\ref{inhomog1d} are given in section \ref{pfthms1-3}, the proofs of Theorems \ref{resetGauss3}--\ref{inhomog3d} are given in
section \ref{pfthms4-6} and the proof of Theorem \ref{2dimcase} is given in section \ref{pfthm7}.

\section{Calculating the expected value of $\tau_a$ for fixed $a$ in the Poissonian instantaneous reset case}\label{exp-a-timehomog}
For the Poissonian instantaneous reset case, the calculation we need appears in the literature.
In dimension $d=1$,
\begin{equation}\label{r}
E_0^{(1;r)}\tau_a=\frac1r\big(e^{\sqrt{\frac{2r}D}\thinspace |a|}-1\big),\ a\in\mathbb{R},
\end{equation}
and in dimensions $d\ge2$,
\begin{equation}\label{dr}
E_0^{(d;r)}\tau_a=\frac1r\Big((\frac{\epsilon_0}{|a|})^{1-\frac d2}\frac{K_{1-\frac d2}(\sqrt \frac rD\thinspace\epsilon_0)}{K_{1-\frac d2}(\sqrt\frac rD\thinspace |a|)}-1\Big),\ |a|>\epsilon_0,
\end{equation}
where $K_\nu$ denotes the modified Bessel function of the second kind of order $\nu$  \cite{EM1,EM3}.
Moreover,  it is known that
$$
K_{-\frac12}(y)=(\frac\pi{2y})^\frac12e^{-y}.
$$
Thus, from \eqref{dr} we have
\begin{equation}\label{3r}
E_0^{(3;r)}\tau_a=\frac1r\Big(\frac{|a|}{\epsilon_0}\thinspace e^{\sqrt{\frac rD}(|a|-\epsilon_0)}-1\Big),\ |a|>\epsilon_0.
\end{equation}

\section{Calculating the expected value of $\tau_a$ for fixed $a$ in the Brownian bridge reset case}\label{exp-a-bb}

%It is  intuitively clear  that $E_0^{\text{bb;T}}\tau_a$ should be larger than $E_0E_0^{(r)}T_a$, since the process's returns to the origin are not instantaneous.
We consider the Brownian bridge reset search process for dimensions $d=1$ and $d=3$.
For the one-dimensional case, we have the following result.
\begin{proposition}\label{expectationa}
\begin{equation}\label{expectonept}
\begin{aligned}
&E_0^{\text{bb},1;T}\tau_a=T(e^{\frac{2a^2}{DT}}-1)+|a|e^{\frac{2a^2}{DT}}\int_0^T\frac{e^{-\frac{a^2}{2Dt(1-\frac tT)}}}{\sqrt{2\pi Dt(1-\frac tT)}}dt=\\
&T(e^{\frac{2a^2}{DT}}-1)+\frac{2|a|e^{\frac{2a^2}{DT}}}D\int_a^\infty e^{-\frac{2x^2}{TD}}dx, \ a\in\mathbb{R}.
\end{aligned}
\end{equation}
\end{proposition}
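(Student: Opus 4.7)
The plan is a renewal argument over successive independent bridges, together with an explicit formula for the density of the first hitting time of $a$ inside one Brownian bridge. Let $\sigma_a$ be the first hitting time of $a$ by a single Brownian bridge $B^{\text{br}}$ on $[0,T]$ from $0$ to $0$ with diffusion coefficient $D$, with the convention $\sigma_a=\infty$ if $a$ is not reached, and set $q=P(\sigma_a\le T)$. Because $X^{\text{bb},1;T}$ is built from i.i.d.\ copies of such bridges, the index $N$ of the first bridge in which $a$ is reached is geometric with parameter $q$, so the standard wait-and-then-hit decomposition gives
\begin{equation*}
E_0^{\text{bb},1;T}\tau_a \;=\; T\cdot\frac{1-q}{q} \;+\; \frac{1}{q}\int_0^T t\,f_{\sigma_a}(t)\,dt,
\end{equation*}
where $f_{\sigma_a}$ is the sub-density of $\sigma_a$ on $(0,T)$.

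Two ingredients now identify this expression. First, the classical reflection-principle formula for the maximum of a Brownian bridge (after the scaling $B^{\text{br}}_t=\sqrt{DT}\,\tilde B^{\text{br}}_{t/T}$ to the standard bridge) gives $q=e^{-2a^2/(DT)}$, so $\frac{1-q}{q}=e^{2a^2/(DT)}-1$, producing the leading summand $T(e^{2a^2/(DT)}-1)$. Second, applying the strong Markov property of unconditioned Brownian motion $W$ at its hitting time $\tau_a^W$, and then Bayes' rule with respect to the conditioning $W(T)=0$, gives
\begin{equation*}
f_{\sigma_a}(t) \;=\; f_{\tau_a^W}(t)\,\frac{p_{T-t}(a,0)}{p_T(0,0)} \;=\; \frac{|a|}{t\sqrt{2\pi Dt(1-t/T)}}\,e^{-\frac{a^2}{2Dt(1-t/T)}},
\end{equation*}
where $f_{\tau_a^W}(t)=\frac{|a|}{\sqrt{2\pi Dt^3}}e^{-a^2/(2Dt)}$ and $p_s(x,y)$ is the Gaussian transition kernel of $W$. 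The factor $t$ in $\int_0^T t\,f_{\sigma_a}(t)\,dt$ cancels the $1/t$ in the density, and multiplying by $1/q=e^{2a^2/(DT)}$ yields the first displayed equality in the proposition.

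For the second equality I must show
\begin{equation*}
\int_0^T \frac{e^{-\frac{a^2}{2Dt(1-t/T)}}}{\sqrt{2\pi Dt(1-t/T)}}\,dt \;=\; \frac{2}{D}\int_{|a|}^\infty e^{-\frac{2x^2}{TD}}\,dx.
\end{equation*}
The substitution $t=T\sin^2\theta$, followed by $\phi=2\theta$, rewrites the left-hand side as $\sqrt{2T/(\pi D)}\int_0^{\pi/2}e^{-\alpha^2/\sin^2\phi}\,d\phi$ with $\alpha:=|a|\sqrt{2/(DT)}$. The remaining identity $\int_0^{\pi/2}e^{-\alpha^2/\sin^2\phi}\,d\phi=\sqrt\pi\int_\alpha^\infty e^{-v^2}\,dv$ is established by substituting $u=\cot\phi$ to rewrite the left side as $e^{-\alpha^2}\int_0^\infty(1+u^2)^{-1}e^{-\alpha^2u^2}\,du$, then using $\frac{1}{1+u^2}=\int_0^\infty e^{-(1+u^2)s}\,ds$ together with Fubini and a Gaussian integration in $u$ to collapse the double integral to a Gaussian tail in $\alpha$.

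The renewal decomposition and the Bayesian computation of the hitting density are routine. The main obstacle is the transition from the explicit integral representation to the Gaussian-tail form, which requires the trigonometric substitution together with the Fubini argument above.
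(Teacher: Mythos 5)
Your proof is correct, and its first half coincides with the paper's argument: the same geometric decomposition over successive independent bridges (yielding $T\frac{1-q}{q}$ with $q=e^{-2a^2/(DT)}$ from the bridge-maximum formula) and the same Bayes/h-transform computation of the conditioned hitting sub-density $f_{\tau_a^W}(t)\,p_{T-t}(a,0)/p_T(0,0)$, with the factor $t$ cancelling to give the first displayed equality. Where you genuinely diverge is the second equality. The paper sets $G(a)=\int_0^T\frac{e^{-a^2/(2Dt(1-t/T))}}{\sqrt{2\pi Dt(1-t/T)}}dt$, differentiates in $a$, evaluates the resulting integral $\int_0^T\frac{e^{-b^2/(2Dt(1-t/T))}}{(t(1-t/T))^{3/2}}dt=\frac{2\sqrt{2\pi D}}{b}e^{-2b^2/(TD)}$ by algebraic manipulation of the bridge-maximum identity (an identity it needs anyway for the three-dimensional Proposition), and then integrates $G'$ back from $a$ to $\infty$ using $G(\infty)=0$. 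You instead evaluate the integral head-on: the substitutions $t=T\sin^2\theta$, $\phi=2\theta$ reduce it to $\sqrt{2T/(\pi D)}\int_0^{\pi/2}e^{-\alpha^2/\sin^2\phi}\,d\phi$ with $\alpha=|a|\sqrt{2/(DT)}$, and the identity $\int_0^{\pi/2}e^{-\alpha^2/\sin^2\phi}\,d\phi=\sqrt\pi\int_\alpha^\infty e^{-v^2}dv$ follows from $u=\cot\phi$, the representation $\frac1{1+u^2}=\int_0^\infty e^{-(1+u^2)s}ds$ and Fubini (justified by positivity); I checked the constants and they match the stated $\frac2D\int_{|a|}^\infty e^{-2x^2/(TD)}dx$ (note your lower limit $|a|$ is the correct reading of the paper's $a$). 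Your route is self-contained and does not borrow the auxiliary identity from the three-dimensional analysis, at the cost of an explicit trigonometric/Laplace-transform computation; the paper's route is shorter here because that auxiliary identity does double duty elsewhere.
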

\begin{proof}By symmetry, it suffices to consider $a>0$.
Let $W(\cdot)$ be a Brownian motion with diffusion coefficient $D$ starting from the origin (the generator of the process is $\frac D2\frac{d^2}{dx^2}$), and denote probabilities and expectations  for this process by $P_0$ and $E_0$.
It is well-known \cite{KS,BO} that  $\tau_a$, the hitting time of $a\in\mathbb{R}$, satisfies
\begin{equation*}
P_0(\tau_a<T|W(T)=0)=e^{-\frac{2a^2}{DT}}.
\end{equation*}
It follows
from the definition of the Brownian bridge and the definition of $X^{\text{bb},1;T}(\cdot)$  that the above equation is equivalent to
\begin{equation}\label{maxBB}
P_0^{\text{bb},1;T}(\tau_a<T)=e^{-\frac{2a^2}{DT}}.
\end{equation}
It is very well-known from the reflection principle  \cite{KS,RW} that the hitting time $\tau_a$ for $W(\cdot)$ has a density $f(t)$ given by
\begin{equation}\label{Reflection}
f(t)=\frac{1}{\sqrt{2\pi D}}\frac{a}{t^\frac32}e^{-\frac{a^2}{2Dt}},\ t>0.
\end{equation}
Of course, the density of $W(t)$ is $\frac{e^{-\frac{a^2}{2Dt}}}{\sqrt{2\pi Dt}},\ a\in\mathbb{R}$.
Thus, $\tau_a$, the hitting time for $W(t)$, $0\le t\le T$, conditioned on $W(T)=0$, or equivalently, the hitting time under $P_0^{\text{bb},1;T}$,  has sub-density
\begin{equation}\label{Reflectionapp}
\frac{f(t)\frac1{\sqrt{2\pi D(T-t)}}e^{-\frac{a^2}{2D(T-t)}}}{\frac1{\sqrt{2\pi DT}}}=
\frac{ae^{-\frac{a^2}{2Dt(1-\frac tT)}}}{\sqrt{2\pi D(1-\frac tT)}\thinspace t^\frac32},\ 0<t<T.
\end{equation}
Consequently, from the definition of $X^{\text{bb},1;T}(\cdot)$,
\begin{equation}\label{condexp}
E_0^{\text{bb},1;T}(\tau_a1_{\tau_a<T})=E_0(\tau_a1_{\tau_a<T}|W(T)=0)=
a\int_0^T\frac{e^{-\frac{a^2}{2Dt(1-\frac tT)}}}{\sqrt{2\pi Dt(1-\frac tT)}}dt.
\end{equation}

From \eqref{maxBB} and the definition of $X^{\text{bb},1;T}(\cdot)$, it follows that
\begin{equation}\label{btwnnn+1}
P_0^{\text{bb},1;T}\big(\tau_a\in(nT,(n+1)T]\big)=(1-e^{-\frac{2a^2}{DT}})^ne^{-\frac{2a^2}{DT}}.
\end{equation}
  Also, from \eqref{maxBB}, \eqref{condexp} and the definition of $X^{\text{bb},1;T}(\cdot)$ it follows that
\begin{equation}\label{expbtwn}
\begin{aligned}
&E_0^{\text{bb},1;T}\big(\tau_a|\tau_a\in(nT,(n+1)T]\big)=nT+E_0^{\text{bb},1;T}(\tau_a|\tau_a<T)=\\
&nT+\frac{E_0^{\text{bb},1;T}(\tau_a1_{\tau_a<T})}{P_0^{\text{bb},1;T}(\tau_a<T)}=
nT+ae^{\frac{2a^2}{DT}}\int_0^T\frac{e^{-\frac{a^2}{2Dt(1-\frac tT)}}}{\sqrt{2\pi Dt(1-\frac tT)}}dt.
\end{aligned}
\end{equation}
Now \eqref{btwnnn+1} and \eqref{expbtwn} yield
$$
\begin{aligned}
&E_0^{\text{bb},1;T}\tau_a=\sum_{n=0}^\infty E_0^{\text{bb},1;T}\big(\tau_a|\tau_a\in(nT,(n+1)T]\big)P_0^{\text{bb},1;T}\big(\tau_a\in(nT,(n+1)T]\big)=\\
&\sum_{n=0}^\infty \Big(nT+ae^{\frac{2a^2}{DT}}\int_0^T\frac{e^{-\frac{a^2}{2Dt(1-\frac tT)}}}{\sqrt{2\pi Dt(1-\frac tT)}}dt\Big)(1-e^{-\frac{2a^2}{DT}})^ne^{-\frac{2a^2}{DT}}=\\
&T(e^{\frac{2a^2}{DT}}-1)+ae^{\frac{2a^2}{DT}}\int_0^T\frac{e^{-\frac{a^2}{2Dt(1-\frac tT)}}}{\sqrt{2\pi Dt(1-\frac tT)}}dt,
\end{aligned}
$$
which gives the first equality in \eqref{expectonept}.  For the second equality in \eqref{expectonept}, we need to prove that
\begin{equation}\label{2ndequal}
\int_0^T\frac{e^{-\frac{a^2}{2Dt(1-\frac tT)}}}{\sqrt{2\pi Dt(1-\frac tT)}}dt=
\frac2D\int_a^\infty e^{-\frac{2x^2}{TD}}dx.
\end{equation}
Let
$G(a)=\int_0^T\frac{e^{-\frac{a^2}{2Dt(1-\frac tT)}}}{\sqrt{2\pi Dt(1-\frac tT)}}dt$.
Then $G'(a)=-\frac a{D\sqrt{2\pi D}}\int_0^T\frac{e^{-\frac{a^2}{2Dt(1-\frac tT)}}}{(t(1-\frac tT))^\frac32}dt$.
The integral appearing in $G'(a)$ is evaluated in \eqref{integral3232} in the course of the proof of Theorem
\ref{expectationa3d}. This gives
\begin{equation}\label{Gprime}
G'(a)=-\frac a{D\sqrt{2\pi D}}\frac{2\sqrt{2\pi D}}ae^{-\frac{2a^2}{TD}}=-\frac2De^{-\frac{2a^2}{TD}}.
\end{equation}
Since $\lim_{a\to\infty}G(a)=0$, we have
$$
G(a)=-\int_a^\infty G'(x)dx=\frac2D\int_a^\infty e^{-\frac{2x^2}{TD}}dx,
$$
proving \eqref{2ndequal}.
\end{proof}

For the  three-dimensional case, we have the following result,  giving upper and lower bounds which depend on  $\epsilon_0$.
\begin{proposition}\label{expectationa3d}
\begin{equation}\label{expectonept3d}
\begin{aligned}
& E_0^{\text{bb},3;T}\tau_a\le T\Big(\frac{|a|+\epsilon_0}{|a|-\epsilon_0}\thinspace\frac{|a|}{2\epsilon_0}e^{\frac{2(|a|+\epsilon_0)^2}{DT}}-1\Big)+
T\frac{|a|+\epsilon_0}{2(|a|-\epsilon_0)}e^{\frac{8|a|\epsilon_0}{DT}},\ |a|>\epsilon_0;\\
&E_0^{\text{bb},3;T}\tau_a\ge T\Big(\frac{|a|}{2\epsilon_0}e^{\frac{2(|a|-\epsilon_0)^2}{DT}}-1\Big)+
T\frac{|a|-\epsilon_0}{2(|a|+\epsilon_0)}e^{-\frac{8|a|\epsilon_0}{DT}},\ |a|>\epsilon_0.
\end{aligned}
\end{equation}
Also,
\begin{equation}\label{unifbdd}
\sup_{\epsilon_0,a: 0<\epsilon_0<|a|\le 1}\epsilon_0E_0^{\text{bb},3;T}\tau_a<\infty.
\end{equation}
\end{proposition}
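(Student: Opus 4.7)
The plan is to follow the cycle-based structure used for Proposition 1 in one dimension, adapted to the three-dimensional geometry. Decomposing the Brownian bridge reset process into i.i.d.\ cycles on which $\{\tau_a<T\}$ occurs with common probability $q:=P_0^{\text{bb},3;T}(\tau_a<T)$, the same argument as in the 1d case yields
\[
E_0^{\text{bb},3;T}\tau_a=\frac{T(1-q)}{q}+\frac{E_0^{\text{bb},3;T}[\tau_a 1_{\tau_a<T}]}{q},
\]
so it suffices to produce two-sided estimates on $q$ and on $E_0^{\text{bb},3;T}[\tau_a 1_{\tau_a<T}]$. Using Bayes relative to the 3d heat kernel $p_t(x,y)=(2\pi Dt)^{-3/2}e^{-|x-y|^2/(2Dt)}$, each of these bridge quantities is an expectation under ordinary 3d Brownian motion weighted by $p_{T-\tau_a}(W(\tau_a),0)/p_T(0,0)$.

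The 3d input is the sub-density of $\tau_a$. Since $|W(t)-a|$ is a 3d Bessel process starting at $|a|$, Doob's $h$-transform by the harmonic function $h(r)=1/r$ converts it to a one-dimensional Brownian motion starting at $|a|$; combining this with the reflection principle and the transience hitting probability $\epsilon_0/|a|$ gives
\[
f(t)=\frac{\epsilon_0(|a|-\epsilon_0)}{|a|\sqrt{2\pi D t^3}}\thinspace e^{-(|a|-\epsilon_0)^2/(2Dt)}.
\]
Since $W(\tau_a)\in\partial B(a,\epsilon_0)$, the deterministic inequalities $(|a|-\epsilon_0)^2\le|W(\tau_a)|^2\le(|a|+\epsilon_0)^2$ give two-sided pointwise bounds on the Gaussian factor. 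For the upper bound on $q$, using $|W(\tau_a)|^2\ge(|a|-\epsilon_0)^2$ the combined exponent collapses via $1/t+1/(T-t)=T/(t(T-t))$, and the resulting integral is the one computed inside \eqref{Gprime}. For the lower bound on $q$, using $|W(\tau_a)|^2\le(|a|+\epsilon_0)^2$ together with the elementary inequality $(|a|-\epsilon_0)^2(T-t)+(|a|+\epsilon_0)^2t\le(|a|+\epsilon_0)^2T$ dominates the combined exponent by $(|a|+\epsilon_0)^2T/(2Dt(T-t))$, and the same identity applies. For both bounds on $E_0^{\text{bb},3;T}[\tau_a 1_{\tau_a<T}]$, the extra factor of $t$ in the integrand changes the relevant integral to
\[
\int_0^T\frac{e^{-cT/(t(T-t))}}{t^{1/2}(T-t)^{3/2}}\thinspace dt=\sqrt{\frac{\pi}{cT}}\thinspace e^{-4c/T},
\]
a new identity which I would prove by substituting $u=(T-t)/t$ and then $w=\sqrt{u}-1/\sqrt{u}$, which converts the integrand into $e^{-c(w^2+2)/T}$. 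Tracking the constants through the four resulting ratios assembles the factors $\frac12\thinspace\frac{|a|\pm\epsilon_0}{|a|\mp\epsilon_0}\thinspace e^{\pm 8|a|\epsilon_0/(DT)}$ appearing in \eqref{expectonept3d}.

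The uniform bound \eqref{unifbdd} does not follow from \eqref{expectonept3d} directly, because the stated upper estimate there blows up like $(|a|-\epsilon_0)^{-1}$ as $|a|\downarrow\epsilon_0$. The remedy is the trivial $E_0^{\text{bb},3;T}[\tau_a 1_{\tau_a<T}]/q\le T$, which gives $E_0^{\text{bb},3;T}\tau_a\le T/q$, so it suffices to bound $\epsilon_0/q$ uniformly over $0<\epsilon_0<|a|\le 1$. A sharper lower bound on $q$ is obtained by restricting the Bayes integral to $t\in(0,T/2]$, on which $(T-t)^{-3/2}e^{-(|a|+\epsilon_0)^2/(2D(T-t))}$ admits a positive lower bound depending only on $D$ and $T$ (using $|a|+\epsilon_0\le 2$), and then evaluating $\int_0^{T/2}t^{-3/2}e^{-(|a|-\epsilon_0)^2/(2Dt)}\thinspace dt$ through the substitution $u=(|a|-\epsilon_0)^2/(2Dt)$ as an incomplete-gamma expression that exactly cancels the vanishing factor $|a|-\epsilon_0$ in $f$. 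This yields $q\ge C(D,T)\thinspace\epsilon_0/|a|$ uniformly in the stated region, so $\epsilon_0 E_0^{\text{bb},3;T}\tau_a\le T|a|/C(D,T)\le T/C(D,T)$. The main technical obstacle is the new integral identity together with the careful exponent dominations needed to recover the stated constants; the uniform bound is short once the improved lower bound on $q$ is in place.
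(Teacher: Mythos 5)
Your proposal is correct and follows essentially the same route as the paper's proof: the strong Markov/Bayes representation of the bridge hitting density against the heat kernel, two-sided bounds via $|a|-\epsilon_0\le|W(\tau_a)|\le|a|+\epsilon_0$ together with the same replacement of $(|a|-\epsilon_0)^2$ by $(|a|+\epsilon_0)^2$ to collapse the exponent for the lower bound, closed-form evaluation of the resulting bridge-time integrals, the geometric cycle decomposition giving $E_0^{\text{bb},3;T}\tau_a=T(1-q)/q+E_0^{\text{bb},3;T}[\tau_a 1_{\tau_a<T}]/q$, and for \eqref{unifbdd} the bound $E_0^{\text{bb},3;T}\tau_a\le T/q$ combined with a refined lower bound $q\ge C(D,T)\epsilon_0/|a|$. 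The only differences are cosmetic (your direct substitution proof of the integral identity, whose final form $\sqrt{\pi/(cT)}\,e^{-4c/T}$ is correct although the intermediate exponent should read $c(w^2+4)/T$ rather than $c(w^2+2)/T$ since $u+2+1/u=(\sqrt{u}-1/\sqrt{u})^2+4$, and your incomplete-gamma evaluation which dispenses with the paper's auxiliary assumption \eqref{assumpsmalla}).
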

\bf \noindent Remark.\rm\ We note that \eqref{unifbdd} is a technical result that will be needed because  the right hand side of the first line in \eqref{expectonept3d} is unbounded as $|a|\to\epsilon_0$.
\begin{proof}
Let $W(t)$ be a three-dimensional Brownian motion with diffusion coefficient $D$ and denote probabilities for the process starting from $b\in\mathbb{R}^3$ by   $P_b$.
Abusing notation, let
$P_0(\tau_a=t)$
denote the density of the distribution of $\tau_a$ under $P_0$; we will make similar abuses of notation for other densities in the sequel.
Of course by isotropy, $P_0(\tau_a=t)=P_a(\tau_0=t)$, and this latter density can be found in \cite{HM} in the case that the diffusion coefficient $D$ in equal to one. After appropriate scaling to take into account $D$, this  gives
\begin{equation}\label{free3hitting}
P_0(\tau_a=t)=\frac{\epsilon_0}{|a|}\frac{|a|-\epsilon_0}{\sqrt{2\pi D}\thinspace t^{\frac32}}e^{-\frac{(|a|-\epsilon_0)^2}{2Dt}},\ |a|>\epsilon_0.
\end{equation}
In the sequel, in all formulas involving $a$ and $\epsilon_0$, it will be tacitly assumed that $|a|>\epsilon_0$.
From
\eqref{free3hitting}, the strong Markov property and the definition of $X^{\text{bb},3,T}(\cdot)$, we have
\begin{equation}\label{SMP}
P_0^{\text{bb},3;T}(\tau_a=t)=\frac{P_0(\tau_a=t)\int_{\nu\in\mathbb{R}^3:|\nu|=1}P_{a+\epsilon_0\nu}(W(T-t)=0)\mu_{a,t}(d\nu)}{P_0(W(T)=0)},
\end{equation}
where $\mu_{a,t}$ is the  distribution under $P_0$ of  $W(\tau_a)$, conditioned on $\{\tau_a=t\}$.
Of course,
\begin{equation}\label{Gaussden}
P_b(W(t)=a)=\frac{e^{-\frac{|b-a|^2}{2 Dt}}}{(2\pi Dt)^\frac32},\ a,b\in\mathbb{R}^3.
\end{equation}
Thus, $P_{a+\epsilon_0\nu}(W(T-t)=0)$ attains its maximum and minimum over $\{\nu\in\mathbb{R}^3:|\nu|=1\}$ at $\nu=-\frac{a}{|a|}$ and $\nu=\frac a{|a|}$ respectively, giving
\begin{equation}\label{maxmin}
\frac{e^{-\frac{(|a|+\epsilon_0)^2}{2 D(T-t)}}}{(2\pi D(T-t))^\frac32}\le P_{a+\epsilon_0\nu}(W(T-t)=0)\le \frac{e^{-\frac{(|a|-\epsilon_0)^2}{2 D(T-t)}}}{(2\pi D(T-t))^\frac32},\ |\nu|=1.
\end{equation}
From \eqref{free3hitting}-\eqref{maxmin}, we conclude that
\begin{equation}\label{maxminprob}
\frac{\epsilon_0}{|a|}\frac{|a|-\epsilon_0}{\sqrt{2\pi D}}\frac{e^{-\frac{(|a|+\epsilon_0)^2}{2 D(T-t)}}e^{-\frac{(|a|-\epsilon_0)^2}{2 Dt}}}{( t(1-\frac tT))^\frac32}\le P_0^{\text{bb},3;T}(\tau_a=t)\le
\frac{\epsilon_0}{|a|}\frac{|a|-\epsilon_0}{\sqrt{2\pi D}}\frac{e^{-\frac{(|a|-\epsilon_0)^2}{2 Dt(1-\frac tT)}}}{( t(1-\frac tT))^\frac32}.
\end{equation}
We will use the lower bound in \eqref{maxminprob} to prove \eqref{unifbdd}, however for the proof of the lower bound in
%\frac{\epsilon_0}{|a|}\frac{|a|-\epsilon_0}{\sqrt{2\pi D}}\frac{e^{-\frac{(|a|+\epsilon_0)^2}{2 Dt(1-\frac tT)}}}{( t(1-\frac tT))^\frac32}\le
\eqref{expectonept3d}, we will use the following lower bound, which is obtained by replacing the term $|a|-\epsilon$ in the exponent on the left hand side of \eqref{maxminprob} by
$|a|+\epsilon$:
\begin{equation}\label{otherlower}
\frac{\epsilon_0}{|a|}\frac{|a|-\epsilon_0}{\sqrt{2\pi D}}\frac{e^{-\frac{(|a|+\epsilon_0)^2}{2 Dt(1-\frac tT)}}}{( t(1-\frac tT))^\frac32}\le
P_0^{\text{bb},3;T}(\tau_a=t).
\end{equation}

From \eqref{maxBB} and \eqref{Reflectionapp}, it follows that
\begin{equation}\label{integralform}
\frac1{\sqrt{2\pi D}}\int_0^Tb\frac{e^{-\frac{b^2}{2Dt(1-\frac tT)}}}{(1-\frac tT)^\frac12 t^\frac32}dt=e^{-\frac{2b^2}{TD}},\ b>0.
\end{equation}
Making the change of variables $s=T-t$ gives
\begin{equation}\label{COV}
\int_0^T\frac{e^{-\frac{b^2}{2Dt(1-\frac tT)}}}{(1-\frac tT)^\frac12 t^\frac32}dt=\int_0^T\frac{e^{-\frac{b^2}{2D(T-s)\frac sT}}}
{(\frac sT)^\frac12(T-s)^\frac32}ds=\frac1T\int_0^T\frac{e^{-\frac{b^2}{2Ds(1-\frac sT)}}}{s^\frac12(1-\frac sT)^\frac32}ds.
\end{equation}
From
 \eqref{integralform} and \eqref{COV}, we have
\begin{equation}\label{integralformagain}
\frac1{\sqrt{2\pi D}}\int_0^Tb\frac{e^{-\frac{b^2}{2Dt(1-\frac tT)}}}{t^\frac12 (1-\frac tT)^\frac32}dt=Te^{-\frac{2b^2}{TD}}.
\end{equation}
Now \eqref{maxminprob} and \eqref{integralformagain} give
\begin{equation}\label{expectationindicator}
\begin{aligned}
&E_0^{\text{bb},3;T}\tau_a1_{\tau_a\le T}\le
 \frac{\epsilon_0}{|a|}\frac{|a|-\epsilon_0}{\sqrt{2\pi D}}\int_0^T
\frac{e^{-\frac{(|a|-\epsilon_0)^2}{2 Dt(1-\frac tT)}}}{t^\frac12(1-\frac tT)^\frac32}dt=
 \frac{\epsilon_0}{|a|}Te^{-\frac{2(|a|-\epsilon_0)^2}{TD}};\\
& E_0^{\text{bb},3;T}\tau_a1_{\tau_a\le T}\ge
 \frac{\epsilon_0}{|a|}\frac{|a|-\epsilon_0}{\sqrt{2\pi D}}\int_0^T
\frac{e^{-\frac{(|a|+\epsilon_0)^2}{2 Dt(1-\frac tT)}}}{t^\frac12(1-\frac tT)^\frac32}dt=
 \frac{\epsilon_0}{|a|}\frac{|a|-\epsilon_0}{|a|+\epsilon_0} Te^{-\frac{2(|a|+\epsilon_0)^2}{TD}}.
\end{aligned}
\end{equation}

From \eqref{maxminprob} and \eqref{otherlower}, we have
\begin{equation}\label{ratioterm}
\frac{\epsilon_0}{|a|}\frac{|a|-\epsilon_0}{\sqrt{2\pi D}}\int_0^T\frac{e^{-\frac{(|a|+\epsilon_0)^2}{2 Dt(1-\frac tT)}}}{( t(1-\frac tT))^\frac32}dt\le
P_0^{\text{bb},3;T}(\tau_a\le T)\le\frac{\epsilon_0}{|a|}\frac{|a|-\epsilon_0}{\sqrt{2\pi D}}\int_0^T\frac{e^{-\frac{(|a|-\epsilon_0)^2}{2 Dt(1-\frac tT)}}}{( t(1-\frac tT))^\frac32}dt.
\end{equation}
We now show that
\begin{equation}\label{integral3232}
\int_0^T\frac{e^{-\frac{b^2}{2 Dt(1-\frac tT)}}}{( t(1-\frac tT))^\frac32}dt=\frac{2\sqrt{2\pi D}}be^{-\frac{2b^2}{TD}}, \ b>0.
\end{equation}
Indeed, \eqref{integral3232} follows from the following calculation, in which we use \eqref{integralform} for the first equality and \eqref{integralformagain} for the last inequality.
$$
\begin{aligned}
&\frac{\sqrt{2\pi D}}be^{-\frac{2b^2}{TD}}=\int_0^T\frac{e^{-\frac{b^2}{2 Dt(1-\frac tT)}}}{(1-\frac tT)^\frac12t^\frac32}dt=
\int_0^T\frac{e^{-\frac{b^2}{2 Dt(1-\frac tT)}}}{(t(1-\frac tT))^\frac32}(1-\frac tT)dt=\\
&\int_0^T\frac{e^{-\frac{b^2}{2 Dt(1-\frac tT)}}}{(t(1-\frac tT))^\frac32}dt-\frac1T\int_0^T\frac{e^{-\frac{b^2}{2 Dt(1-\frac tT)}}}{t^\frac12(1-\frac tT)^\frac32}dt=
\int_0^T\frac{e^{-\frac{b^2}{2 Dt(1-\frac tT)}}}{(t(1-\frac tT))^\frac32}dt-\frac{\sqrt{2\pi D}}be^{-\frac{2b^2}{TD}}.
\end{aligned}
$$
From    \eqref{ratioterm} and \eqref{integral3232}, we have
\begin{equation}\label{ratiotermagain}
\frac{2\epsilon_0}{|a|}\frac{|a|-\epsilon_0}{|a|+\epsilon_0}e^{-\frac{2(|a|+\epsilon_0)^2}{TD}}\le P_0^{\text{bb},3;T}(\tau_a\le T)\le\frac{2\epsilon_0}{|a|}e^{-\frac{2(|a|-\epsilon_0)^2}{TD}}.
\end{equation}

We now write
\begin{equation}\label{expectationtaua}
E_0^{\text{bb},3;T}\tau_a=\sum_{n=0}^\infty E_0^{\text{bb},3;T}(\tau_a|\tau_a\in(nT,(n+1)T])P_0^{\text{bb},3;T}(\tau_a\in(nT,(n+1)T]).
\end{equation}
From the definition of $X^{\text{bb},3;T}$, we have
\begin{equation}\label{condnT}
 E_0^{\text{bb},3;T}(\tau_a|\tau_a\in(nT,(n+1)T])=nT+ E_0^{\text{bb},3;T}(\tau_a|\tau_a\le T)=nT+\frac{E_0^{\text{bb},3;T}\tau_a1_{\tau_a\le T}}{P_0^{\text{bb},3;T}(\tau_a\le T)}
\end{equation}
and
\begin{equation}\label{geomprob}
P_0^{\text{bb},3;T}(\tau_a\in(nT,(n+1)T])=\big(P_0^{\text{bb},3;T}(\tau_a>T)\big)^nP_0^{\text{bb},3;T}(\tau_a\le T).
\end{equation}
Since for $q\in(0,1)$, one has $\sum_{n=0}^\infty n(1-q)^nq=\frac {1-q}q$, which is decreasing in $q$,
we have from  \eqref{ratiotermagain}
\begin{equation}\label{geometricexp}
\begin{aligned}
&\sum_{n=0}^\infty n\big(P_0^{\text{bb},3;T}(\tau_a>T)\big)^nP_0^{\text{bb},3;T}(\tau\le T)\le
\frac{|a|(|a|+\epsilon_0)}{2\epsilon_0(|a|-\epsilon_0)}e^{\frac{2(|a|+\epsilon_0)^2}{TD}}-1;\\
&\sum_{n=0}^\infty n\big(P_0^{\text{bb},3;T}(\tau_a>T)\big)^nP_0^{\text{bb},3;T}(\tau\le T)\ge
\frac{|a|}{2\epsilon_0}e^{\frac{2(|a|-\epsilon_0)^2}{TD}}-1.
\end{aligned}
\end{equation}
Now  \eqref{expectonept3d} follows from
\eqref{expectationtaua}-\eqref{geometricexp} along with \eqref{ratiotermagain} and \eqref{expectationindicator}.

We now prove \eqref{unifbdd}.
From the upper bound in \eqref{expectonept3d}, it suffices to consider the case that $|a|-\epsilon_0$ is small;
indeed, $|a|-\epsilon_0$ being small is the only thing that can possibly prevent the left hand side of \eqref{unifbdd} from being finite.
Thus we can and  will assume that
\begin{equation}\label{assumpsmalla}
(|a|-\epsilon_0)^2\le \frac T4.
\end{equation}
(Of course, since from \eqref{unifbdd} we are always assuming that  $|a|\le 1$, the assumption \eqref{assumpsmalla} holds automatically if $T\ge4$.)
Let
$$
p(a)=P_0^{\text{bb},3;T}( \tau_a\le T).
$$
From the definition of the Brownian bridge reset search process,
$$
P_0^{\text{bb},3;T}\big( (M-1)T\le \tau_a\le MT\big)=(1-p(a))^{M-1}p(a),\ M\in\mathbb{N}.
$$
Thus,
\begin{equation}\label{expestimate}
\begin{aligned}
&E_0^{\text{bb},3;T}\tau_a\le \sum_{M=1}^\infty MTP_0^{\text{bb},3;T}\big( (M-1)T\le \tau_a\le MT\big)=\\
&Tp(a)\sum_{M=1}^\infty M(1-p(a))^{M-1}=\frac T{p(a)}.
\end{aligned}
\end{equation}

We now obtain a lower bound on $p(a)$.
Using the lower bound in \eqref{maxminprob} for the second inequality below, we have
\begin{equation}\label{paest}
\begin{aligned}
%P_0^{\text{bb},3;T}( \frac{2\tau_a}T\le 1)=
&p(a)=P_0^{\text{bb},3;T}( \tau_a\le T)>P_0^{\text{bb},3;T}( |a-\epsilon_0|^2\le\tau_a\le \frac T2)\ge\\
&\int_{|a-\epsilon_0|^2}^{\frac T2}\frac{\epsilon_0}{|a|}\frac{|a|-\epsilon_0}{\sqrt{2\pi D}}\frac{e^{-\frac{(|a|+\epsilon_0)^2}{2 D(T-t)}}e^{-\frac{(|a|-\epsilon_0)^2}{2 Dt}}}{( t(1-\frac tT))^\frac32}dt\ge\\
&C\epsilon_0\int_{(|a|-\epsilon_0)^2}^\frac T2\frac{|a|-\epsilon_0}{t^\frac32}dt=
2C\epsilon_0\big(1-\frac{\sqrt2(|a|-\epsilon_0)}{\sqrt T}\big)\ge 2C\epsilon_0(1-\frac{\sqrt2}2),\\
&\text{for}\ |a|\le 1 \ \text{and}\ a \ \text{and}\ \epsilon_0\ \text{satisfying}\ \eqref{assumpsmalla},
\ \text{where}\ C\ \text{depends only on}\ T\ \text{and}\ D.
\end{aligned}
\end{equation}
 From  \eqref{expestimate} and \eqref{paest},
we conclude that
$E_0^{\text{bb},3;T}\tau_a\le \frac T{2C\epsilon_0(1-\frac{\sqrt2}2)}$, for $a$ and $\epsilon_0$ as in \eqref{paest}.
This completes the proof of \eqref{unifbdd}.

\end{proof}

\section{Calculating the expected value of $\tau_a$ for fixed $a$ in the periodic  instantaneous reset case}\label{exp-a-inhomog}
We consider the periodic  instantaneous reset process for dimensions $d=1$ and $d=3$.  The formulas in this section (sometimes in slightly different forms) appear, for example,
in \cite{BDR} (for one  and three dimensions), \cite{BBPMC} (for one dimension) and \cite{FBPCM} (for all dimensions).
For completeness, we supply the short proofs.
In the one-dimensional case, we have the following result.
\begin{proposition}\label{expainhomog1}
\begin{equation}\label{expainhomog1form}
E_0^{1;T}\tau_a=T\Big(\big(\int_0^T\frac {|a|}{\sqrt{2\pi D}}\frac1{t^\frac32}e^{-\frac{a^2}{2Dt}}dt\big)^{-1}-1\Big)+\frac{\int_0^T\frac1{t^\frac12}e^{-\frac{a^2}{2Dt}}dt}{\int_0^T\frac1{t^\frac32}e^{-\frac{a^2}{2Dt}}dt},\ a\neq0.
\end{equation}
\end{proposition}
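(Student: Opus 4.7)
The plan is to follow exactly the template used in the proof of Proposition~\ref{expectationa}, but without the conditioning on $W(T)=0$; the argument is actually simpler here because the between-reset motions are unconditioned Brownian paths and no analogue of the identity \eqref{2ndequal} is needed.

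First I would introduce $f(t)=\frac{|a|}{\sqrt{2\pi D}\,t^{3/2}}e^{-a^2/(2Dt)}$, the reflection-principle density \eqref{Reflection} of $\tau_a$ for one-dimensional Brownian motion with diffusion coefficient $D$, and set
\[
p(a)=P_0(\tau_a\le T)=\int_0^T f(t)\,dt=\frac{|a|}{\sqrt{2\pi D}}\int_0^T\frac1{t^{3/2}}e^{-a^2/(2Dt)}\,dt.
\]
From the definition \eqref{Xinhomoginstant}, the segments $\{B_n^1(\cdot)\}$ are i.i.d.\ Brownian motions on $[0,T]$, so the number of completed cycles before $a$ is first hit is geometric:
\[
P_0^{1;T}\bigl(\tau_a\in(nT,(n+1)T]\bigr)=(1-p(a))^n\,p(a),\quad n=0,1,2,\ldots.
\]
Moreover, conditional on $\tau_a\in(nT,(n+1)T]$, the excess $\tau_a-nT$ has the distribution of $\tau_a$ under unconditioned Brownian motion conditioned on $\{\tau_a\le T\}$, so
\[
E_0^{1;T}\bigl(\tau_a\mid\tau_a\in(nT,(n+1)T]\bigr)=nT+\frac{1}{p(a)}\int_0^T t\,f(t)\,dt.
\]

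Next I would decompose $E_0^{1;T}\tau_a$ as the sum over $n\ge 0$ of these conditional expectations weighted by the corresponding geometric probabilities. The $nT$ contributions sum, via $\sum_{n=0}^\infty n(1-p)^n p=(1-p)/p$, to $T\bigl(1/p(a)-1\bigr)$, which is precisely the first term in \eqref{expainhomog1form}. The constant (in $n$) contribution equals
\[
\frac{\int_0^T t\,f(t)\,dt}{p(a)}=\frac{\int_0^T t^{-1/2}e^{-a^2/(2Dt)}\,dt}{\int_0^T t^{-3/2}e^{-a^2/(2Dt)}\,dt},
\]
where I used $\int_0^T t\,f(t)\,dt=\frac{|a|}{\sqrt{2\pi D}}\int_0^T t^{-1/2}e^{-a^2/(2Dt)}\,dt$ and the prefactors $|a|/\sqrt{2\pi D}$ cancel between numerator and denominator. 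This is the second term in \eqref{expainhomog1form}.

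I do not anticipate any real obstacle: everything reduces to the reflection-principle density \eqref{Reflection} plus a geometric sum. The only minor points to check are that $a\neq 0$ ensures $\tau_a>0$ almost surely on each segment (so $a$ is never hit at a reset epoch) and that $\int_0^T t\,f(t)\,dt<\infty$, both of which are immediate from the explicit form of $f$. The hardest part is really just bookkeeping of the two integrals to make them line up in the form stated in \eqref{expainhomog1form}.
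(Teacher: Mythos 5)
Your proposal is correct and follows essentially the same route as the paper: the reflection-principle density \eqref{Reflection} gives $P_0^{1;T}(\tau_a<T)$ as in \eqref{probless1}, the cycle structure of $X^{1;T}$ yields the geometric decomposition \eqref{geominhomog1} and the conditional expectation \eqref{condexpinhomog1}, and summing over $n$ gives \eqref{expainhomog1form}. Your explicit remark that $\int_0^T t\,f(t)\,dt=\frac{|a|}{\sqrt{2\pi D}}\int_0^T t^{-1/2}e^{-a^2/(2Dt)}\,dt$ with cancellation of prefactors is just the computation the paper leaves implicit in \eqref{condexpinhomog1}.
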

\begin{proof}
By symmetry, it suffices to consider $a>0$. From the definition of the process $X^{1;T}$, the probability of the event $\{\tau_a<T\}$ is the same as it is for a Brownian motion with diffusion coefficient $D$. Thus,
from \eqref{Reflection}, we have
\begin{equation}\label{probless1}
P_0^{1;T}(\tau_a<T)=\int_0^T\frac a{\sqrt{2\pi D}}\frac1{t^\frac32}e^{-\frac{a^2}{2Dt}}dt.
\end{equation}
From \eqref{probless1} and the definition of $X^{1;T}$, we have
\begin{equation}\label{geominhomog1}
P_0^{1;T}(\tau_a\in(nT,(n+1)T])=\big(1-\int_0^T\frac a{\sqrt{2\pi D}}\frac1{t^\frac32}e^{-\frac{a^2}{2Dt}}dt\big)^n\int_0^T\frac a{\sqrt{2\pi D}}\frac1{t^\frac32}e^{-\frac{a^2}{2Dt}}dt,
\end{equation}
and
\begin{equation}\label{condexpinhomog1}
E_0^{1;T}(\tau_a|\tau_a\in(nT,(n+1)T])=nT+\frac{\int_0^T\frac1{t^\frac12}e^{-\frac{a^2}{2Dt}}dt}{\int_0^T\frac1{t^\frac32}e^{-\frac{a^2}{2Dt}}dt}.
\end{equation}
Writing
\begin{equation*}
E_0^{1;T}\tau_a=\sum_{n=0}^\infty E_0^{1;T}\big(\tau_a|\tau_a\in(nT,(n+1)T]\big)P_0^{1;T}\big(\tau_a\in(nT,(n+1)T]\big),
\end{equation*}
the theorem follows from  \eqref{geominhomog1} and \eqref{condexpinhomog1}.

\end{proof}
In the three-dimensional case, we have the following result.
\begin{proposition}\label{expainhomog3}
\begin{equation}\label{expainhomog3form}
E_0^{3;T}\tau_a=T\Big(\big(\int_0^T\frac{\epsilon_0(|a|-\epsilon)}{|a|\sqrt{2\pi D}}\frac1{t^\frac32}e^{-\frac{(|a|-\epsilon_0)^2}{2Dt}}dt\big)^{-1}-1\Big)+\frac{\int_0^T\frac1{t^\frac12}e^{-\frac{(|a|-\epsilon_0)^2}{2Dt}}dt}{\int_0^T\frac1{t^\frac32}e^{-\frac{(|a|-\epsilon_0)^2}{2Dt}}dt},\
|a|>\epsilon_0.
\end{equation}
\end{proposition}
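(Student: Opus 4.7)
The plan is to mimic precisely the argument used for Proposition \ref{expainhomog1}, replacing the reflection-principle density \eqref{Reflection} by the three-dimensional hitting sub-density \eqref{free3hitting}. By rotational symmetry it suffices to work with $|a|$. Under the process $X^{3;T}$, on the time interval $[0,T)$ the searcher is a free $d=3$ Brownian motion, so
\[
P_0^{3;T}(\tau_a<T)=\int_0^T\frac{\epsilon_0}{|a|}\,\frac{|a|-\epsilon_0}{\sqrt{2\pi D}}\,\frac{1}{t^{3/2}}\,e^{-\frac{(|a|-\epsilon_0)^2}{2Dt}}\,dt,
\]
using \eqref{free3hitting} for the sub-density of $\tau_a$.

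Next, from the definition of $X^{3;T}$ in \eqref{Xinhomoginstant}, the event $\{\tau_a>nT\}$ is exactly the event that $n$ independent free Brownian motions (each of length $T$) all fail to hit the $\epsilon_0$-ball about $a$; and conditioned on $\tau_a>nT$, the remaining hitting time is an independent copy starting at time $nT$. This gives the geometric decomposition
\[
P_0^{3;T}\bigl(\tau_a\in(nT,(n+1)T]\bigr)=\bigl(1-P_0^{3;T}(\tau_a<T)\bigr)^{n}\,P_0^{3;T}(\tau_a<T),
\]
and, by the same renewal reasoning,
\[
E_0^{3;T}\bigl(\tau_a\,\big|\,\tau_a\in(nT,(n+1)T]\bigr)=nT+\frac{E_0^{3;T}(\tau_a\,1_{\tau_a<T})}{P_0^{3;T}(\tau_a<T)}.
\]
The numerator here is $\int_0^T t\,P_0(\tau_a=t)\,dt$, and the factor of $t$ cancels against one power of $t^{1/2}$ in the denominator of \eqref{free3hitting}, so that the $\epsilon_0(|a|-\epsilon_0)/(|a|\sqrt{2\pi D})$ prefactors cancel in the ratio, leaving
\[
\frac{E_0^{3;T}(\tau_a\,1_{\tau_a<T})}{P_0^{3;T}(\tau_a<T)}=\frac{\int_0^T t^{-1/2}e^{-(|a|-\epsilon_0)^2/(2Dt)}\,dt}{\int_0^T t^{-3/2}e^{-(|a|-\epsilon_0)^2/(2Dt)}\,dt}.
\]

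Finally, summing
\[
E_0^{3;T}\tau_a=\sum_{n=0}^\infty E_0^{3;T}\bigl(\tau_a\,\big|\,\tau_a\in(nT,(n+1)T]\bigr)\,P_0^{3;T}\bigl(\tau_a\in(nT,(n+1)T]\bigr)
\]
and using $\sum_{n\ge0}n(1-q)^n q=(1-q)/q$ and $\sum_{n\ge0}(1-q)^n q=1$ with $q=P_0^{3;T}(\tau_a<T)$, the first term contributes $T\bigl(P_0^{3;T}(\tau_a<T)^{-1}-1\bigr)$ and the second contributes the conditional expectation computed above; this is exactly \eqref{expainhomog3form}. There is no genuine obstacle here: the entire argument is a direct three-dimensional analogue of Proposition \ref{expainhomog1}, the only substantive change being the use of \eqref{free3hitting} in place of \eqref{Reflection}, and the only bit of care required is in tracking the constant prefactors from \eqref{free3hitting} so as to verify they cancel in the ratio defining the conditional mean.
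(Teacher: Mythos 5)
Your proposal is correct and follows essentially the same route as the paper: the paper simply notes that $P_0^{3;T}(\tau_a<T)$ is given by integrating the three-dimensional hitting sub-density \eqref{free3hitting} and then repeats the geometric/renewal decomposition of Proposition \ref{expainhomog1} verbatim, which is exactly your argument, including the cancellation of the $\epsilon_0(|a|-\epsilon_0)/(|a|\sqrt{2\pi D})$ prefactor in the conditional mean.
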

\begin{proof}
From the definition of the process $X^{3;T}$, the probability of the event $\{\tau_a<T\}$ is the same as it is for a three-dimensional Brownian motion with diffusion coefficient $D$. Thus, from \eqref{free3hitting}, we have
\begin{equation}\label{probless3}
P_0^{1;T}(\tau_a<T)=\int_0^T\frac {\epsilon_0(|a|-\epsilon_0)}{|a|\sqrt{2\pi D}}\frac1{t^\frac32}e^{-\frac{(|a|-\epsilon_0)^2}{2Dt}}dt.
\end{equation}
The proof of the proposition now follows exactly as the proof of Proposition \ref{expainhomog1}, but  with \eqref{probless3} replacing \eqref{probless1}.
\end{proof}

\section{Proofs of Theorems \ref{resetGauss}-\ref{inhomog1d}}\label{pfthms1-3}
\noindent \it Proof of Theorem \ref{resetGauss}.\rm\
From \eqref{r}, we have
\begin{equation}\label{Gausscal1}
\begin{aligned}
&\int_{\mathbb{R}}\big(E_0^{(1;r)}\tau_a\big)\mu^{\text{Gauss},1}_{\sigma^2}(da)=2\int_0^\infty\frac{e^{\sqrt{\frac{2r}D}\thinspace a}-1}r\thinspace\frac{e^{-\frac{a^2}{2\sigma^2}}}{\sqrt{2\pi}\sigma}da.
\end{aligned}
\end{equation}
Also,
\begin{equation}\label{Gausscal2}
\begin{aligned}
&\int_0^\infty \frac{e^{\sqrt{\frac{2r}D}\thinspace a}e^{-\frac{a^2}{2\sigma^2}}}{\sqrt{2\pi}\sigma}da=
&e^{\frac{r\sigma^2}D}\int_0^\infty \frac{e^{-\frac{(a-\sqrt{\frac{2r}D}\sigma^2)^2}{2\sigma^2}}}{\sqrt{2\pi}\sigma}da=e^{\frac{r\sigma^2}D}\int_{-\sqrt{\frac{2r}D}\sigma}^\infty \frac{e^{-\frac{x^2}2}}{\sqrt{2\pi}}dx.
\end{aligned}
\end{equation}
We obtain \eqref{expGauss} from \eqref{Gausscal1} and \eqref{Gausscal2}. A change of variables in \eqref{expGauss} yields \eqref{equiv}.
Finally, \eqref{graphcalc} was obtained from \eqref{equiv} using the Desmos graphing calculator.
\hfill $\square$

\medskip

\noindent \it Proof of Theorem \ref{bridgeGauss}.\rm\
From \eqref{expectonept}, we have
\begin{equation}\label{first}
\begin{aligned}
&\int_{\mathbb{R}}\big(E_0^{\text{bb},1;T}\tau_a\big)\mu^{\text{Gauss},1}_{\sigma^2}(da)=2T\int_0^\infty (e^{\frac{2a^2}{DT}}-1)\frac{e^{-\frac{a^2}{2\sigma^2}}}{\sqrt{2\pi}\sigma}da+\\
&2\int_0^\infty ae^{\frac{2a^2}{DT}}\Big(\int_0^T\frac{e^{-\frac{a^2}{2Dt(1-\frac tT)}}}{\sqrt{2\pi Dt(1-\frac tT)}}dt\Big)\frac{e^{-\frac{a^2}{2\sigma^2}}}{\sqrt{2\pi}\sigma}da.
\end{aligned}
\end{equation}
The first integral on the right hand side of \eqref{first} is  infinite if $T\le\frac{4\sigma^2}D$. From now on, we assume that
$T>\frac{4\sigma^2}D$.
We have
\begin{equation*}
2\int_0^\infty e^{\frac{2a^2}{DT}}e^{-\frac{a^2}{2\sigma^2}}da=2\int_0^\infty e^{-\frac12\frac{DT-4\sigma^2}{DT\sigma^2}a^2}da=
\sqrt{2\pi\frac{DT\sigma^2}{DT-4\sigma^2}}.
\end{equation*}
Thus, the first term on the right hand side of \eqref{first} satisfies
\begin{equation}\label{firstpiece}
2T\int_0^\infty (e^{\frac{2a^2}{DT}}-1)\frac{e^{-\frac{a^2}{2\sigma^2}}}{\sqrt{2\pi}\sigma}da=T\big(\frac{DT}{DT-4\sigma^2}\big)^\frac12-T.
\end{equation}

We now turn to the second term on the right hand side of \eqref{first}.
We have
\begin{equation}\label{expint}
\begin{aligned}
&\int_0^\infty ae^{\frac{2a^2}{DT}}e^{-\frac{a^2}{2Dt(1-\frac tT)}}e^{-\frac{a^2}{2\sigma^2}}da=\int_0^\infty ae^{-\frac12\frac{T^2\sigma^2+t(T-t)(DT-4\sigma^2)}{DtT(T-t)\sigma^2}a^2}da=\\
&\frac{DtT(T-t)\sigma^2}{T^2\sigma^2+t(T-t)(DT-4\sigma^2)}.
\end{aligned}
\end{equation}
Using \eqref{expint}, we can write  the second term on the right hand side of \eqref{first} as
\begin{equation}\label{second1}
\begin{aligned}
&2\int_0^\infty ae^{\frac{2a^2}{DT}}\Big(\int_0^T\frac{e^{-\frac{a^2}{2Dt(1-\frac tT)}}}{\sqrt{2\pi Dt(1-\frac tT)}}dt\Big)\frac{e^{-\frac{a^2}{2\sigma^2}}}{\sqrt{2\pi}\sigma}da=\\
&\frac{T^\frac32\sigma}\pi\int_0^T\frac{\sqrt{Dt(T-t)}}{T^2\sigma^2+t(T-t)(DT-4\sigma^2)}dt=
\frac{2T^\frac32\sigma}\pi\int_0^{\frac T2}\frac{\sqrt{Dt(T-t)}}{T^2\sigma^2+t(T-t)(DT-4\sigma^2)}dt.
\end{aligned}
\end{equation}
Make the substitution $x=\sqrt{t(T-t)}$. Then
$t=\frac12\big(T-(T^2-4x^2)^\frac12\big)$ and $dt=2x(T^2-4x^2)^{-\frac12}dx$.  We obtain
\begin{equation}\label{second2}
\int_0^{\frac T2}\frac{\sqrt{Dt(T-t)}}{T^2\sigma^2+t(T-t)(DT-4\sigma^2)}dt=2\sqrt D\int_0^{\frac T2}
\frac1{(T^2-4x^2)^\frac12}\Big(\frac{x^2}{T^2\sigma^2+x^2(DT-4\sigma^2)}\Big)dx.
\end{equation}
Now make the substitution $x=\frac T2\sin\theta$. Then $dx=\frac T2\cos\theta d\theta$. We obtain
\begin{equation}\label{second3}
\int_0^{\frac T2}\frac1{(T^2-4x^2)^\frac12}\Big(\frac{x^2}{T^2\sigma^2+x^2(DT-4\sigma^2)}\Big)dx=
\frac18\int_0^{\frac\pi2}\frac{\sin^2\theta}{\sigma^2+\frac{DT-4\sigma^2}4\sin^2\theta}d\theta.
\end{equation}
We write
$$
\begin{aligned}
&\frac{\sin^2\theta}{\sigma^2+\frac{DT-4\sigma^2}4\sin^2\theta}=\frac4{DT-4\sigma^2}\thinspace\frac{\sin^2\theta}{\sin^2\theta+\frac{4\sigma^2}{DT-4\sigma^2}}=\\
&\frac4{DT-4\sigma^2}-\frac{16\sigma^2}{(DT-4\sigma^2)^2}\thinspace\frac1{\sin^2\theta+\frac{4\sigma^2}{DT-4\sigma^2}}.
\end{aligned}
$$
Thus,
\begin{equation}\label{second4}
\int_0^{\frac\pi2}\frac{\sin^2\theta}{\sigma^2+\frac{DT-4\sigma^2}4\sin^2\theta}d\theta=\frac{2\pi}{DT-4\sigma^2}-\frac{16\sigma^2}{(DT-4\sigma^2)^2}
\int_0^\frac\pi2 \frac1{\sin^2\theta+\frac{4\sigma^2}{DT-4\sigma^2}}d\theta.
\end{equation}
Making the substitution $\tan\theta=s$, in which case $\sin\theta=\frac s{\sqrt{1+s^2}}$ and $d\theta=\frac1{1+s^2}ds$, we obtain for any $A>0$,
\begin{equation}\label{second5}
\begin{aligned}
&\int_0^\frac\pi2\frac1{\sin^2\theta+A}d\theta=\int_0^\infty\frac1{A+(A+1)s^2}ds=\frac1{A+1}\sqrt{\frac{A+1}A}
\arctan\sqrt{\frac{A+1}A}s\Big|_0^\infty=\\
&\frac\pi{2\sqrt{A(A+1)}}.
\end{aligned}
\end{equation}
From \eqref{second5}, we have
\begin{equation}\label{second6}
\begin{aligned}
&\int_0^\frac\pi2 \frac1{\sin^2\theta+\frac{4\sigma^2}{DT-4\sigma^2}}d\theta=\frac\pi2\Big(\frac{4\sigma^2}{DT-4\sigma^2}
\big(\frac{4\sigma^2}{DT-4\sigma^2}+1\big)\Big)^{-\frac12}=\\
&\frac{\pi(DT-4\sigma^2)}{4\sigma\sqrt{DT}}.
\end{aligned}
\end{equation}
From \eqref{second1}-\eqref{second4} and \eqref{second6}, we obtain
\begin{equation}\label{secondpiece}
\begin{aligned}
&2\int_0^\infty ae^{\frac{2a^2}{DT}}\Big(\int_0^T\frac{e^{-\frac{a^2}{2Dt(1-\frac tT)}}}{\sqrt{2\pi Dt(1-\frac tT)}}dt\Big)\frac{e^{-\frac{a^2}{2\sigma^2}}}{\sqrt{2\pi}\sigma}da=\\
&(\frac{2T^\frac32\sigma}\pi)(2\sqrt D)(\frac18)\Big(\frac{2\pi}{DT-4\sigma^2}-\frac{16\sigma^2}{(DT-4\sigma^2)^2}\frac{\pi(DT-4\sigma^2)}{4\sigma\sqrt{DT}}\Big)=\\
&\frac{T^\frac32\sigma\sqrt D}{2\pi}\frac{2\pi\sqrt{DT}-4\sigma\pi}{(DT-4\sigma^2)\sqrt{DT}}=\frac{T^\frac32\sigma\sqrt D}{2\pi}\frac{2\pi(\sqrt{DT}-2\sigma)}{(DT-4\sigma^2)\sqrt{DT}}=\frac{T\sigma}{\sqrt{DT}+2\sigma}.
\end{aligned}
\end{equation}
Now \eqref{expbridgeGauss} follows from \eqref{first}, \eqref{firstpiece} and \eqref{secondpiece}.
A change of variables in \eqref{expbridgeGauss}  yields \eqref{equivagain}. Finally, \eqref{graphcalcagain} was obtained from \eqref{equivagain} using the Desmos graphing calculator.
\hfill $\square$

\medskip

\noindent \it\ Proof of Theorem \ref{inhomog1d}.\rm\  We begin by showing that the left hand side of
\eqref{expGaussinhomog1} is in fact finite if and only if $T>\frac{\sigma^2}D$. We leave it to the reader to show that the second term on the right hand side of
\eqref{expainhomog1form} is bounded as $a\to\infty$. (Make the change of variables, $s=\frac t{a^2}$, then let $x=\frac1{a^2}$ and apply l'H\^opital's rule appropriately.)
Thus, this term is integrable against any Gaussian measure. Now consider the expression
$(\int_0^T\frac a{\sqrt{2\pi D}}\frac1{t^\frac32}e^{-\frac{a^2}{2Dt}}dt\big)^{-1}$ in the first term on the right hand side of \eqref{expainhomog1form}.
It is known from the reflection principle \cite{KS,RW} that
\begin{equation}\label{reflection2ways}
\int_0^T\frac a{\sqrt{2\pi D}}\frac1{t^\frac32}e^{-\frac{a^2}{2Dt}}dt=\int_a^\infty \frac{e^{-\frac{x^2}{2DT}}}{\sqrt{2\pi DT}}dx=
2\int_{\frac a{\sqrt{DT}}}^\infty \frac{e^{-\frac{y^2}2}}{\sqrt{2\pi}}dy.
\end{equation}
It is well-known and can be proved by l'H\^opital's rule that
$\int_x^\infty e^{-\frac{y^2}2}dy\sim\frac1x e^{-\frac{x^2}2}$ as $x\to\infty$.
From this it follows
that
$$
(\int_0^T\frac a{\sqrt{2\pi D}}\frac1{t^\frac32}e^{-\frac{a^2}{2Dt}}dt\big)^{-1}\sim\frac{\sqrt{2\pi}\thinspace a}{2\sqrt{DT}}e^{\frac{a^2}{2DT}},\ \text{as}\ a\to\infty.
$$
From these facts, it follows that \eqref{expGaussinhomog1} is finite if and only if $T>\frac{\sigma^2}D$.

From now on, we assume that $T>\frac{\sigma^2}D$.
From \eqref{expainhomog1form}, in order to prove the theorem, we need to evaluate
$\int_0^\infty \big(\int_0^T\frac a{\sqrt{2\pi D}}\frac1{t^\frac32}e^{-\frac{a^2}{2Dt}}dt\big)^{-1}\frac2{\sqrt{2\pi}\thinspace\sigma}e^{-\frac{a^2}{2\sigma^2}}da$
and $\int_0^\infty \Big(\frac{\int_0^T\frac1{t^\frac12}e^{-\frac{a^2}{2Dt}}dt}{\int_0^T\frac1{t^\frac32}e^{-\frac{a^2}{2Dt}}dt}\Big)\frac2{\sqrt{2\pi}\thinspace \sigma}e^{-\frac{a^2}{2\sigma^2}}da$.
Consider the latter term.
Making the substitutions  $a=\sigma x$ and $T=\frac{\sigma^2}D\mathcal{T}$, we have
\begin{equation}\label{subst12}
\int_0^\infty \Big(\frac{\int_0^T\frac1{t^\frac12}e^{-\frac{a^2}{2Dt}}dt}{\int_0^T\frac1{t^\frac32}e^{-\frac{a^2}{2Dt}}dt}\Big)\frac2{\sqrt{2\pi}\thinspace \sigma}e^{-\frac{a^2}{2\sigma^2}}da=
\int_0^\infty\frac{\int_0^{\frac{\sigma^2}D\mathcal{T}}\frac1{t^\frac12}e^{-\frac{\sigma^2x^2}{2Dt}}dt}{\int_0^{\frac{\sigma^2}D\mathcal{T}}\frac1{t^\frac32}e^{-\frac{\sigma^2x^2}{2Dt}}dt}
\frac2{\sqrt{2\pi}}e^{-\frac{x^2}2}dx.
\end{equation}
Making the substitution   $t=\frac{\sigma^2}D\thinspace s$, we have
\begin{equation}\label{subst3}
\int_0^\infty\frac{\int_0^{\frac{\sigma^2}D\mathcal{T}}\frac1{t^\frac12}e^{-\frac{\sigma^2x^2}{2Dt}}dt}{\int_0^{\frac{\sigma^2}D\mathcal{T}}\frac1{t^\frac32}e^{-\frac{\sigma^2x^2}{2Dt}}dt}
\frac2{\sqrt{2\pi}}e^{-\frac{x^2}2}dx=\frac{\sigma^2}D\Big(\frac2{\sqrt{2\pi}}\int_0^\infty \frac{\int_0^\mathcal{T}\frac1{s^\frac12}e^{-\frac{x^2}{2s}}ds}{\int_0^\mathcal{T}\frac1{s^\frac32}e^{-\frac{x^2}{2s}}ds}\thinspace  e^{-\frac{x^2}2}dx\Big).
\end{equation}
Making the same series of substitutions in the other integral that we need to evaluate, we obtain
\begin{equation}\label{subst2ndfinal}
\int_0^\infty \big(\int_0^T\frac a{\sqrt{2\pi D}}\frac1{t^\frac32}e^{-\frac{a^2}{2Dt}}dt\big)^{-1}\frac2{\sqrt{2\pi}\thinspace\sigma}e^{-\frac{a^2}{2\sigma^2}}da=
2\int_0^\infty \frac1{\int_0^\mathcal{T}\frac x{s^\frac32}e^{-\frac{x^2}{2s}}ds}
\thinspace e^{-\frac{x^2}2}dx.
\end{equation}
Now \eqref{equivagainn} follows from
 \eqref{expainhomog1form}, \eqref{subst3} and \eqref{subst2ndfinal}.
 Making the substitution    $s=\frac{Dt}{\sigma^2}$   in \eqref{equivagainn} gives \eqref{expGaussinhomog1}.
We obtained \eqref{graphcalcagainn} from \eqref{equivagainn} using the Desmos graphing calculator.

 \hfill $\square$

\section{Proofs of Theorems \ref{resetGauss3}-\ref{inhomog3d}}\label{pfthms4-6}
\noindent \it\ Proof of Theorem \ref{resetGauss3}.\rm\
From \eqref{3r} and the fact that $E_0^{(3;r)}\tau_a=0$, for $|a|\le\epsilon_0$, we have
\begin{equation}
\epsilon_0\int_{\mathbb{R}^3}\big(E_0^{(3;r)}\tau_a\big)\mu^{\text{Gauss},3}_{\sigma^2}(da)=
\frac1r\int_{a\in\mathbb{R}^3:|a|>\epsilon_0}\big(|a|e^{\sqrt{\frac rD}(|a|-\epsilon_0)}-\epsilon_0)\frac{e^{-\frac{|a|^2}{2\sigma^2}}}{(2\pi\sigma^2)^\frac32}da.
\end{equation}
Thus, the monotone convergence theorem gives
\begin{equation}\label{limitepsilon}
\lim_{\epsilon_0\to0}\epsilon_0\int_{\mathbb{R}^3}\big(E_0^{(3;r)}\tau_a\big)\mu^{\text{Gauss},3}_{\sigma^2}(da)=
\frac1r\int_{\mathbb{R}^3}|a|e^{\sqrt{\frac rD}|a|}\frac{e^{-\frac{|a|^2}{2\sigma^2}}}{(2\pi\sigma^2)^\frac32}da.
\end{equation}
Letting $R=|a|$ and then letting $x=\frac R\sigma$, we have
\begin{equation}\label{2changeofvar}
\begin{aligned}
&\frac1r\int_{\mathbb{R}^3}|a|e^{\sqrt{\frac rD}|a|}\frac{e^{-\frac{|a|^2}{2\sigma^2}}}{(2\pi\sigma^2)^\frac32}da=
\frac1r\int_0^\infty Re^{\sqrt{\frac rD}R}\frac{e^{-\frac{R^2}{2\sigma^2}}}{(2\pi \sigma^2)^\frac32}4\pi R^2dR=\\
&\frac{2\sigma}{\sqrt{2\pi}\thinspace r}\int_0^\infty x^3e^{\sqrt{\frac rD}\thinspace\sigma x}e^{-\frac{x^2}2}dx.
\end{aligned}
\end{equation}
Letting $s=\sigma^2\frac rD$, we obtain
\begin{equation}\label{rschange}
\frac{2\sigma}{\sqrt{2\pi}\thinspace r}\int_0^\infty x^3e^{\sqrt{\frac rD}\thinspace\sigma x}e^{-\frac{x^2}2}dx=
\frac{\sigma^3}D\big(\frac2{\sqrt{2\pi}s}\int_0^\infty x^3e^{\sqrt sx}e^{-\frac{x^2}2}dx\big).
\end{equation}
Now \eqref{resetGauss3form} follows from \eqref{limitepsilon} and \eqref{2changeofvar},
and
\eqref{resetGauss3formagain}
follows from \eqref{limitepsilon}-\eqref{rschange}. Finally, \eqref{graphiccalc3}  follows from \eqref{resetGauss3formagain} using the Desmos graphing calculator.
\hfill $\square$
\medskip

\noindent \it\ Proof of Theorem \ref{bridgeGauss3}.\rm\
From the monotone convergence theorem, it follows that
$\lim_{\epsilon_0\to0}\int_{\{a\in\mathbb{R}^3:|a|>\epsilon_0\}}e^{\frac{2(|a|-\epsilon_0)^2}{DT}}\mu_{\sigma^2}^{\text{Gauss},3}(da)=\infty$, if $T\le \frac{4\sigma^2}D$.
Thus, from the second line of \eqref{expectonept3d}, it follows that
\begin{equation}\label{infinitecase}
\lim_{\epsilon_0\to0}\epsilon_0\int_{\mathbb{R}^3}\big(E_0^{\text{bb},3;T}\tau_a\big)\mu^{\text{Gauss},3}_{\sigma^2}(da)=\infty,\ \text{if}\ T\le \frac{4\sigma^2}D.
\end{equation}

From now on, we assume that $T>\frac{4\sigma^2}D$.
From \eqref{expectonept3d} and \eqref{unifbdd}, we have
\begin{equation}\label{everywhereconv}
\lim_{\epsilon_0\to0}\epsilon_0E_0^{\text{bb},3;T}\tau_a=\frac T2|a|e^{\frac{2|a|^2}{TD}},\ \text{for all}\ 0\neq a\in \mathbb{R}^3.
\end{equation}
From \eqref{everywhereconv} along with the first line in \eqref{expectonept3d} and \eqref{unifbdd}, it follows from the dominated convergence theorem that
\begin{equation}\label{limitinreset3}
\lim_{\epsilon_0\to0}\epsilon_0\int_{\mathbb{R}^3}\big(E_0^{\text{bb},3;T}\tau_a\big)\mu^{\text{Gauss},3}_{\sigma^2}(da)=
\frac T2\int_{\mathbb{R}^3}|a|e^{\frac{2|a|^2}{TD}}\frac{e^{-\frac{|a|^2}{2\sigma^2}}}{(2\pi\sigma^2)^\frac32}da.
\end{equation}
We have $\frac{2|a|^2}{TD}-\frac{|a|^2}{2\sigma^2}=-(\frac{TD-4\sigma^2)}{TD\sigma^2}\frac{|a|^2}2$.
Thus,
\begin{equation}\label{complsq}
\int_{\mathbb{R}^3}|a|e^{\frac{2|a|^2}{TD}}e^{-\frac{|a|^2}{2\sigma^2}}da=\int_{\mathbb{R}^3}|a|^2e^{-\frac{|a|}2(\frac{TD-4\sigma^2}{TD\sigma^2})}da=
\int_0^\infty Re^{-\frac{R^2}2(\frac{TD-4\sigma^2}{TD\sigma^2})}4\pi R^2dR.
\end{equation}
Integrating by parts yields
\begin{equation}\label{byparts}
\int_0^\infty Re^{-\frac{R^2}2(\frac{TD-4\sigma^2}{TD\sigma^2})}4\pi R^2dR=8\pi(\frac{TD\sigma^2}{TD-4\sigma^2})^2,\ T>\frac{4\sigma^2}D.
\end{equation}
Now
\eqref{expbridgeGauss3} follows from  \eqref{infinitecase}-\eqref{byparts}, \eqref{expbridgeGauss3again} follows immediately from \eqref{expbridgeGauss3} and \eqref{graphiccalc3again} is obtained using the Desmos graphing
calculator.
\hfill $\square$

\medskip

\noindent\it Proof of Theorem \ref{inhomog3d}.\rm\
Considering
\eqref{expainhomog3form},
a proof similar to that given for the corresponding result in the one-dimensional case
shows that the left hand side of  \eqref{expGaussinhomog3d}
is finite if and only if $T>\frac{\sigma^2}D$.

From now on, we assume that $T>\frac{\sigma^2}D$.
Using the dominated convergence theorem for the first term on the right hand side of \eqref{expainhomog3form} and the bounded convergence theorem for the second term there,
and recalling that $E_0^{3;T}\tau_a=0$, for $|a|\le\epsilon_0$, we obtain
\begin{equation}\label{limitepsiloninhomog3}
\lim_{\epsilon_0\to0}\epsilon_0\int_{\mathbb{R}^3}\big(E_0^{(3;T)}\tau_a\big)\mu^{\text{Gauss},3}_{\sigma^2}(da)=T\int_{\mathbb{R}^3}\frac1{\int_0^T\frac1{\sqrt{2\pi D}}\frac1{t^\frac32}e^{-\frac{|a|^2}{2Dt}}dt}\thinspace
\frac{e^{-\frac{|a|^2}{2\sigma^2}}}{(2\pi\sigma^2)^\frac32}da.
\end{equation}
We have
\begin{equation}\label{spherical}
\int_{\mathbb{R}^3}\frac1{\int_0^T\frac1{\sqrt{2\pi D}}\frac1{t^\frac32}e^{-\frac{|a|^2}{2Dt}}dt}\thinspace
\frac{e^{-\frac{|a|^2}{2\sigma^2}}}{(2\pi\sigma^2)^\frac32}da=\int_0^\infty\frac1{\int_0^T\frac1{\sqrt{2\pi D}}\frac1{t^\frac32}e^{-\frac{R^2}{2Dt}}dt}\thinspace
\frac{e^{-\frac{R^2}{2\sigma^2}}}{(2\pi\sigma^2)^\frac32}4\pi R^2dR.
\end{equation}
Making the substitutions $R=x\sigma$ and $T=\frac{\sigma^2}D\mathcal{T}$, we have
\begin{equation}\label{aTsubst}
\int_0^\infty\frac1{\int_0^T\frac1{\sqrt{2\pi D}}\frac1{t^\frac32}e^{-\frac{R^2}{2Dt}}dt}\thinspace
\frac{e^{-\frac{R^2}{2\sigma^2}}}{(2\pi\sigma^2)^\frac32}4\pi R^2dR=
\int_0^\infty\frac1{\int_0^{\frac{\sigma^2}D\mathcal{T}}\frac1{\sqrt{2\pi D}}\frac1{t^\frac32}e^{-\frac{\sigma^2x^2}{2Dt}}dt}\thinspace
\frac{e^{-\frac{x^2}2}}{(2\pi)^\frac32}4\pi x^2dx.
\end{equation}
Making the substitution $t=\frac{\sigma^2}Ds$, we have
\begin{equation}\label{tssubst}
\int_0^\infty\frac1{\int_0^{\frac{\sigma^2}D\mathcal{T}}\frac1{\sqrt{2\pi D}}\frac1{t^\frac32}e^{-\frac{\sigma^2x^2}{2Dt}}dt}\thinspace
\frac{e^{-\frac{x^2}2}}{(2\pi)^\frac32}4\pi x^2dx=2\sigma\int_0^\infty\frac1{\int_0^\mathcal{T}\frac1{s^\frac32}e^{-\frac{x^2}{2s}}ds}\thinspace x^2e^{-\frac{x^2}2}dx.
\end{equation}
From \eqref{limitepsiloninhomog3}-\eqref{tssubst}, we obtain
\eqref{equivinhomog3d}.
Making the substitution $s=\frac{Dt}{\sigma^2}$ in \eqref{equivinhomog3d} gives  \eqref{expGaussinhomog3d}.
We obtained \eqref{graphiccalc3againn} from \eqref{equivinhomog3d} using the Desmos graphic calculator.

\hfill $\square$
\section{Proof of Theorem \ref{2dimcase}}\label{pfthm7}
From \eqref{dr}, we have
\begin{equation}\label{2da}
E_0^{2;r}\tau_a=\frac1r\Big(\frac{K_0(\sqrt{\frac rD}\thinspace\epsilon_0)}{K_0(\sqrt{\frac rD}\thinspace |a|)}-1\Big),\ |a|>\epsilon_0.
\end{equation}
One has the asymptotic formula $K_0(x)\sim-\log\frac x2$ as $x\to0$ \cite[p.80]{Watson}.
Thus,
\begin{equation}\label{logeps}
\lim_{\epsilon_0\to0}\frac1{|\log\epsilon_0|}E_0^{2;r}\tau_a=
\frac1{rK_0(\sqrt{\frac rD}\thinspace |a|)}.
\end{equation}
From the above asymptotic behavior, $\frac1{K_0(\sqrt{\frac rD}\thinspace |a|)}$ is integrable in a neighborhood of  $0\in\mathbb{R}$. Also, $K_0(x)$ decays exponentially as $x\to\infty$ \cite[p.202]{Watson}. Therefore,
 $\frac1{K_0(\sqrt{\frac rD}\thinspace |a|)}$ is integrable against any Gaussian density.
 Using these facts with \eqref{logeps} and the dominated convergence theorem, and recalling that $E_0^{2;r}\tau_a=0$, for $|a|\le\epsilon_0$, we obtain
 \begin{equation}\label{2dlimit}
 \lim_{\epsilon\to0}\frac1{|\log\epsilon_0|}\int_{\mathbb{R}^2}(E^{2;r}\tau_a)\mu_{\sigma^2}^{\text{Gauss},2}(da)=\frac1r\int_{\mathbb{R}^2}\frac1{K_0(\sqrt{\frac rD}\thinspace |a|)}\thinspace\frac{e^{-\frac{|a|^2}{2\sigma^2}}}{2\pi\sigma^2}da.
 \end{equation}
 We have
 \begin{equation}\label{polar}
 \int_{\mathbb{R}^2}\frac1{K_0(\sqrt{\frac rD}\thinspace |a|)}\thinspace\frac{e^{-\frac{|a|^2}{2\sigma^2}}}{2\pi\sigma^2}da=\int_0^\infty\frac1
 {K_0(\sqrt{\frac rD}\thinspace R)}\thinspace\frac{e^{-\frac{R^2}{2\sigma^2}}}{\sigma^2} RdR.
 \end{equation}
 Making the change of variables, $R=\sigma x$, we have
 \begin{equation}\label{substax2d}
 \int_0^\infty\frac1
 {K_0(\sqrt{\frac rD}\thinspace R)}\thinspace\frac{e^{-\frac{R^2}{2\sigma^2}}}{\sigma^2} RdR=\int_0^\infty\frac1{K_0(\sqrt{\frac rD}\thinspace\sigma x)}xe^{-\frac{x^2}2}dx.
\end{equation}
Now \eqref{2dimformula} follows from \eqref{2dlimit}--\eqref{substax2d}.
One obtains \eqref{equiv2dim} from \eqref{2dimformula} by substituting $r=\frac D{\sigma^2}s$.
We obtained \eqref{graphcalc2} from \eqref{equiv2dim}, using the Desmos graphing calculator and  using the representation
$K_0(x)=\int_0^\infty e^{-x\cosh t}dt$ \cite[p.181]{Watson}.
\hfill$\square$

\bf\noindent Acknowledgement.\rm\ The authors thanks two referees whose suggestions and constructive criticism very significantly improved this paper.

\end{document}